\renewcommand{\sharp}{\#}
\renewcommand{\epsilon}{\varepsilon}
\DeclareMathOperator{\codim}{codim}
\DeclareMathOperator{\cod}{codim}
\DeclareMathOperator{\Gal}{Gal}
\newcommand{\torsione}{\zeta}
\newcommand{\sotto}{B}
\newcommand{\Vpi}{{V_\pi}}
\newcommand{\cVv}{(h(V)+\deg V)}
\newcommand{\cV}{(h(V)+\deg V)[k_\mathrm{tor}(V):k_\mathrm{tor}]}
\newcommand{\ccinque}{c_1}
\newcommand{\csei}{c_2}
\newcommand{\elle}{{\mathcal{L}}}
\newcommand{\qe}{\mathbb{Q}}
\newtheorem{thm}{Theorem}[section]
\newtheorem{con}[thm]{Conjecture}
\newtheorem{propo}[thm]{Proposition}
\newtheorem{lem}[thm]{Lemma}
\newtheorem{est}[thm]{Estimate}
\newtheorem{cor}[thm]{Corollary}
\newtheorem{D}[thm]{Definition}
\newtheorem{remark}[thm]{Remark}
\title[]{On the Torsion Anomalous Conjecture in CM abelian varieties}
\author{S. Checcoli}
\author{E. Viada}
\begin{document}

\subjclass[2010]{Primary 11G50, Secondary 14G40}
\begin{abstract}
The Torsion Anomalous Conjecture (TAC) states that a subvariety $V$ of an abelian variety $A$ has only finitely many maximal torsion anomalous subvarieties. In this work we prove, with an effective method, some cases of the TAC when the ambient variety $A$ has CM, generalising our previous results in products of CM elliptic curves. When $V$ is a curve, we give new results and we deduce some implications on the effective Mordell-Lang Conjecture. 

\end{abstract}
\maketitle
\section{Introduction}
Let $A$ be an abelian variety embedded in the projective space and let $V$ be a proper subvariety of $A$. Assume that both $A$ and $V$ are defined over the algebraic numbers. 
\begin{D}  
The variety $V$  is a \emph{translate},  respectively  a \emph{torsion variety},  if it is a finite  union of translates of  proper algebraic subgroups by  points, respectively  by torsion points.

 $V$  is \emph{transverse}, respectively \emph{weak-transverse}, in $A$ if $V$ {is irreducible} and $V$ is not contained in any translate, respectively in any torsion subvariety of $A$.
\end{D}

It is a classical problem in diophantine geometry to investigate the relationship between the above geometrical definitions and the arithmetical properties of the variety $V$. In this direction, there are several celebrated theorems such as the so-called Manin-Mumford, Mordell-Lang and Bogomolov Conjectures.
\smallskip

Recently E. Bombieri, D. Masser and U. Zannier in \cite{BMZ1} suggested a new approach to this kind of investigations, introducing in particular the  notion of torsion anomalous intersections. 
\begin{D}
An irreducible subvariety $Y$ of $V$ is \emph{$V$-torsion anomalous} if 
\begin{itemize}
\item[-] $Y$ is an irreducible component of $V\cap (B+\zeta)$, with $B+\zeta$ an irreducible  torsion variety;
\item[-]  the dimension of $Y$ is larger than expected, i.e.  $${\mathrm {codim}\,} Y < {\mathrm {codim}\,} V +  {\mathrm {codim}\,}  B.$$
\end{itemize}

The variety $B+\zeta$ is \emph{minimal} for $Y$ if, in addition, it has minimal dimension.
The \emph{relative codimension} of $Y$ is the codimension of $Y$ in such a  minimal $B+\zeta$.

We say that $Y$ is {\emph{maximal} if it} is not contained in any $V$-torsion anomalous variety of strictly larger dimension.
\end{D}

In \cite{BMZ1}, the authors formulate several conjectures. We state here one natural variant.
\begin{con}[TAC, Torsion Anomalous Conjecture] For any algebraic subvariety  $V$  of a (semi-)abelian variety, there are only finitely many maximal $V$-torsion anomalous varieties.\end{con}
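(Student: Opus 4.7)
The plan is to follow the Bombieri--Masser--Zannier strategy of combining a height upper bound on intersections with a Galois-theoretic lower bound, organised by a stratification of maximal torsion anomalous subvarieties according to their minimal torsion translate. First I would reduce to the case where $V$ is irreducible and weak-transverse in $A$: any maximal $V$-torsion anomalous $Y$ either equals $V$ (possible only when $V$ is not weak-transverse, and handled by passing to an ambient torsion subvariety of lower dimension) or arises as a $V'$-torsion anomalous subvariety of some weak-transverse irreducible component $V'$ of an intersection $V\cap T$ inside a proper torsion subvariety $T\subset A$, so a straightforward induction on $\dim A + \dim V$ allows us to assume $V$ weak-transverse in $A$.

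Under this assumption I would stratify the candidate $Y$'s by the pair $(B,\zeta)$ with $B+\zeta$ minimal for $Y$, grouping them first by the connected component $B$. For each fixed $B$, the component $Y\subset V\cap(B+\zeta)$ has dimension strictly larger than the expected $\dim V+\dim B-\dim A$, so a Vojta--R\'emond type geometric height inequality yields an upper bound of the shape $h(Y)\ll (h(V)+\deg V)\cdot f(B,\zeta)$ in terms of arithmetic data of the minimal translate. On the other hand, a Lehmer/Galois-orbit lower bound for the torsion component of $\zeta$, together with an essential minimum estimate on $V$, forces the arithmetic complexity of $\zeta$ to be small. Matching the two inequalities confines $\zeta$ (modulo $B$) to a finite set, and the finiteness of the possible $B$ participating in a maximal torsion anomalous chain completes the step; maximality of $Y$ then prevents infinite ascending chains and delivers the desired finiteness.

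The main obstacle, and the reason the full TAC remains open, lies entirely in the lower-bound step: one needs an unconditional Lehmer-type inequality of the form $\widehat{h}(\zeta)\geq c(A)\,(\deg\zeta)^{-\kappa}$, uniform over all abelian subvarieties $B$ of $A$, with $\kappa$ small enough to be absorbed by the height inequality. Such uniform bounds are known, through work of Amoroso--David, David--Hindry and others, only for tori, for CM abelian varieties, and for a few related settings (such as products of CM elliptic curves, which underlie the effective results of this paper); outside these cases the required estimate is wide open. I would therefore expect any serious attack on the full conjecture to proceed either conditionally on such Lehmer-type estimates, or via the Pila--Zannier method: use the Ax--Schanuel theorem for abelian varieties (Pila--Tsimerman, Gao) to translate TAC into a statement about rational points on definable sets in an o-minimal structure, and then apply the Pila--Wilkie counting theorem. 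This alternative route is cleaner analytically but still relies on a ``large Galois orbits'' hypothesis of exactly the same arithmetic strength, so the fundamental obstruction is intrinsic and independent of the chosen framework.
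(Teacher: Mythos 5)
The statement you were asked about is a \emph{conjecture}: the paper offers no proof of it, and explicitly lists the handful of special cases known (Viada and Maurin for curves, Bombieri--Masser--Zannier for codimension two in tori, and the paper's own results for CM abelian varieties in relative codimension one). Your proposal correctly recognises this and is, accordingly, a strategy sketch rather than a proof; on that central point you and the paper are in agreement, and your identification of the missing ingredient --- an unconditional Lehmer-type lower bound beyond the CM setting --- matches the paper's own remark that the CM hypothesis enters precisely through Carrizosa's relative Lehmer estimate, while the non-translate case needs only a Bogomolov-type bound (hence only a positive density of ordinary primes).

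That said, several specifics of your sketch do not match how the method actually runs in the cases that are known, and one of the discrepancies hides a genuine second obstruction. First, the lower bound is never applied to ``the torsion component of $\zeta$'': Carrizosa's theorem is applied to the point $Y$ (or to $Y_0$ when $Y=H+Y_0$ is a translate) of \emph{infinite order} inside its minimal torsion variety, and the Bogomolov bound is applied to $Y$ \emph{transverse} in $B+\zeta$; the finiteness of the admissible $\zeta$ for a fixed $B$ is obtained separately, by an induction using R\'emond's quantitative Manin--Mumford theorem (Proposition \ref{sottogruppo} of the paper), not by any height lower bound on $\zeta$. Second, and more importantly, even granting a uniform Lehmer estimate the matching of upper and lower bounds only closes when $Y$ has relative codimension one in $B+\zeta$ (so that weak-transverse forces transverse, which the Bogomolov bound requires) or when $Y$ is a point or a translate (so that Lehmer applies); for higher relative codimension the maximal anomalous $Y$ is merely weak-transverse in $B+\zeta$, and neither estimate is available. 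This geometric gap is why the paper's main theorem carries the hypotheses $\codim V>g$ and relative codimension $1$, and why Theorem \ref{t51} needs the extra inequality $(n-b)>(v-y)(b-y)$ together with a case split on whether $Y$ is transverse in a smaller translate. Your reduction-and-induction framework would therefore stall not only on the arithmetic lower bound you identify, but also on this transversality defect, which no Lehmer conjecture resolves.
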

The TAC is well known to have several important consequences. It implies, for instance, the Manin-Mumford and the Mordell-Lang Conjectures; it is also related to model theory by work of B. Zilber and to algebraic dynamics by recent work of J. H. Silverman and P. Morton.  In addition, R. Pink generalised it  to mixed Shimura varieties.

Only few cases of the TAC are known: Viada \cite{ioant} proved it for curves in a product of elliptic curves; Maurin \cite{Maurin} for curves in a torus; Bombieri, Masser and Zannier \cite{BMZ1} for varieties of codimension  $2$ in a torus.  Habegger \cite{hab} gave related results, under some stronger assumptions on $V$.

In \cite{TAI}, we prove an effective TAC for maximal $V$-torsion anomalous varieties of relative codimension one in a product of CM elliptic curves.
Our bounds are explicit and uniform in their dependence on $V$.
As an immediate corollary, we prove the TAC for varieties of codimension 2, obtaining an elliptic analogue of the toric result in \cite{BMZ1}. 
In this work we generalise our results to CM abelian varieties. In \cite{TAI}, we also point out interesting relations between this kind of theorems and other relevant conjectures, such as the Zilber-Pink Conjecture and the above-mentioned ones. 
\smallskip

Let $A\subseteq \mathbb{P}^m$ be an abelian variety with CM defined over a number field $k$ and let $k_{\mathrm{tor}}$ be the field of definition of the torsion points of $A$.  Let $A$ be isogenous to a product of simple abelian varieties of dimension at most $g$. For a point $x\in A$, we denote by $\hat{h}(x)$ its canonical N\'eron-Tate height. For a subvariety $V\subseteq A$, we denote by $h(V)$ its normalised height and by $k_{\mathrm{tor}}(V)$ its field of definition  over $k_{\mathrm{tor}}$ (see Section \ref{Prelo}). By $\ll$ we denote an inequality up to a multiplicative constant depending  on $A$. Our main result is the following:
\begin{thm}\label{main}
 Let $V\subseteq A$ be a weak-transverse subvariety of codimension $>g$.  Then there are only finitely many maximal $V$-torsion anomalous  subvarieties $Y$ of relative codimension 1. 

{\bf Effective version: }More precisely, if $B+\zeta$ is minimal for $Y$, then for any positive real $\eta$  there exist constants depending only on $A$ and $\eta$ such that:
\begin{enumerate}
\item\label{non_tras} if $Y$  is not a translate then
\begin{align*}
 \deg B &\ll_\eta   (h(V)+ \deg V)^{\codim B+\eta},\\
h(Y)  &\ll_\eta   (h(V)+ \deg V)^{\codim B+\eta},\\
 \deg Y &\ll_\eta \deg V (h(V)+ \deg V)^{\codim B-1+\eta};
 \end{align*}
\item\label{punto}  if $Y$ is a point, then 
 \begin{align*}
 \deg B &\ll_\eta (\cV)^{\codim B+\eta} ,\\
\hat{h}(Y)  &\ll_\eta (h(V)+\deg V)^{\codim B+\eta}[k_{\mathrm{tor}}(V):k_{\mathrm{tor}}]^{\codim B -1+\eta},\\
[k_{\mathrm{tor}}(Y):k_{\mathrm{tor}}] &\ll_\eta \deg V[k_\mathrm{tor}(V):k_\mathrm{tor}]^{\codim B+\eta}{(h(V)+\deg V)}^{\codim B-1+\eta};
\end{align*}
\item\label{trass} if $Y$ is a translate of positive dimension, then
 \begin{align*}
  \deg B&\ll_\eta {(\cV)}^{\codim B+\eta},\\
 h(Y)&\ll_\eta {(h(V)+\deg V)}^{\codim B+\eta}{[k_\mathrm{tor}(V):k_\mathrm{tor}]}^{\codim B -1+\eta},\\
 \deg Y&\ll_\eta \deg V{(\cV)}^{\codim B-1+\eta}.
 \end{align*}
\end{enumerate}
 In addition,  the torsion points $\zeta$ belong to a  finite set of cardinality effectively bounded in terms of $\deg V$, $\deg B$ and constants depending only on $A$.
\end{thm}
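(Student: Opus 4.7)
The plan is to follow and generalise the method developed in \cite{TAI} for products of CM elliptic curves. The CM structure enters through the isogeny decomposition $A\sim A_1^{n_1}\times\cdots\times A_r^{n_r}$ into simple CM factors of dimension at most $g$, which lets us put any abelian subvariety $B\subseteq A$ in a controlled normal form and produce quotient maps $\phi_B:A\to A/B$ whose degree is controlled by $\deg B$. Using the minimality of $B+\zeta$, we may assume that $Y$ is weak-transverse in $B+\zeta$; combined with the codimension hypothesis $\codim V>g$ and the weak-transversality of $V$, this ensures that $\phi_B(V)$ is a proper subvariety of $A/B$.

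The heart of the proof is the degree bound on $B$, obtained by comparing two estimates for the height of $Y$. On the arithmetic side, Philippon's arithmetic Bezout theorem applied to $Y$ as an irreducible component of $V\cap(B+\zeta)$ gives
\[
h(Y)+\deg Y \ll (h(V)+\deg V)\deg B.
\]
On the geometric side, Zhang's essential minimum inequality applied to $\phi_B(V)$, together with the fact that $\phi_B(Y)=\phi_B(\zeta)$ is a torsion point of $\phi_B(V)$, produces a lower bound for $h(Y)$ that grows like a fractional power of $\deg B$. Eliminating $h(Y)$ from these two inequalities yields $\deg B\ll_\eta (h(V)+\deg V)^{\codim B+\eta}$. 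I expect the main obstacle to lie precisely in this step: one must combine an effective Bogomolov-type theorem on $A/B$ with an exponent matching the codimension of $\phi_B(V)$, and the loss $\eta$ has to absorb the weaker effective Bogomolov available for general CM abelian varieties compared to the toric or elliptic cases treated in \cite{BMZ1} and \cite{TAI}.

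Once $\deg B$ is controlled, the remaining estimates follow by substituting it back into the Bezout inequalities applied to $V\cap(B+\zeta)$. In the point case, passing from $h(Y)$ to $\hat h(Y)$ and letting $\Gal(\overline{k_{\mathrm{tor}}}/k_{\mathrm{tor}}(V))$ act on the orbit of $Y$ inside $V\cap(B+\zeta)$ converts the degree bound into the stated bound on $[k_{\mathrm{tor}}(Y):k_{\mathrm{tor}}]$, while the height bound follows by a Dobrowolski-type lower bound for $\hat h(Y)$. The translate case of positive dimension is handled analogously, by working on the abelian subvariety underlying $Y$. Finally, the finiteness of the set of admissible torsion points reduces, via $\phi_B$, to counting torsion points of $A/B$ lying in the proper subvariety $\phi_B(V)$; their orders are effectively bounded in terms of $\deg V$ and $\deg B$ by a Manin-Mumford-type counting argument, completing the proof.
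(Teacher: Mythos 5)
There is a genuine gap at what you yourself call the heart of the proof: the two inequalities you propose to compare cannot be closed to give a bound on $\deg B$. If you apply the Arithmetic B\'ezout Theorem directly to $V\cap(B+\zeta)$ you get $h(Y)\ll (h(V)+\deg V)\deg B$ with the \emph{full} degree of $B$ on the right, while the best available lower bound (the sharp Bogomolov estimate of Theorem \ref{due} applied to $Y$ in $B+\zeta$) gives $\mu(Y)\gg_\eta (\deg B)^{1-\eta}/(\deg Y)^{1+\eta}$; combining these via Zhang yields $(\deg B)^{1-\eta}\ll (h(V)+\deg V)\deg B\,(\deg Y)^{\eta}$, which is vacuous. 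The indispensable idea in the paper is to write $B+\zeta$ as a component of the zero set of forms $h_1,\dots,h_r$ of increasing degrees with $d_1\cdots d_r\asymp\deg B$, delete the last form to obtain a larger torsion variety $B'$ with $\deg B'\ll(\deg B)^{(r-1)/r}$, check via the hypothesis $\codim V>g$ that $\codim B'\ge \dim V-\dim Y$ so that Lemma \ref{cruciale} makes $Y$ a component of $V\cap B'$, and only then apply Arithmetic B\'ezout to $V\cap B'$. The saving from $\deg B$ to $(\deg B)^{(r-1)/r}$ is exactly what lets the exponents close and produces $\deg B\ll_\eta(h(V)+\deg V)^{r+\eta}$. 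Your proposal never introduces $B'$, and it is also here (not in making $\phi_B(V)$ proper) that the hypothesis $\codim V>g$ is used. Relatedly, your source for the lower bound is misidentified: Zhang's inequality applied to $\phi_B(V)$ together with the observation that $\phi_B(Y)=\phi_B(\zeta)$ is torsion cannot produce a lower bound for $h(Y)$ growing with $\deg B$ (torsion points have height zero). The correct input is Theorem \ref{due} applied to $Y$ \emph{transverse} in $B+\zeta$ (weak-transversality from Lemma \ref{wt} upgrades to transversality precisely because the relative codimension is $1$), and, in the point and translate cases, Carrizosa's relative Lehmer bound applied to $Y$ (or to $Y_0$ with $Y=H+Y_0$, $Y_0\in H^\perp$, via Lemma \ref{minimoess}) in its minimal torsion variety.

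The finiteness of the set of admissible $\zeta$ is also not established by your argument. You claim that $\codim V>g$ forces $\phi_B(V)$ to be a proper subvariety of $A/B$, but this requires $\dim V<\dim A-\dim B$, which fails whenever $\dim B\ge\codim V$; in general the restriction of $\pi_B$ to $V$ can be dominant, and then there are no torsion points of a proper image to count. The paper handles this in Proposition \ref{sottogruppo} by an induction on $\dim V$: when the projection is dominant, the relevant $\zeta$ map into the exceptional locus where the fibre dimension jumps, one intersects $V$ with the preimages of the components of the torsion closure there (counted by R\'emond's quantitative Manin--Mumford, Estimate \ref{lemrem}, with degrees controlled by Estimate \ref{lemBZ}), and applies the inductive hypothesis to the resulting lower-dimensional weak-transverse pieces. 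Your one-line reduction covers essentially only the base case of that induction.
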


{This theorem can be reformulated in the context of several other well known conjectures, as explained in the introduction of \cite{TAI}.}

The proof of Theorem \ref{main} is split in two parts, depending on whether $Y$ is a translate or not: {in Section \ref{seztanontraslati} we prove} part \eqref{non_tras} and in Section \ref{caso_tra} we prove parts \eqref{punto} and \eqref{trass}.

The main ingredients  {(see Section \ref{ingredienti})} needed for the proof of Theorem \ref{main} are: Zhang's inequality, the {Arithmetic} B\'ezout Theorem by P. Philippon, {our} sharp Bogomolov type bound proved in \cite{FuntorialeAV}, the {Relative} Lehmer by M. Carrizosa.
As usual, the CM hypothesis is due to the use of a Lehmer bound, known only for CM varieties. This result is only needed when $Y$ is a translate, while case \eqref{non_tras} of Theorem \ref{main} holds with the weaker assumption that $A$ has a positive density of ordinary primes, {as required} to apply a Bogomolov type bound (see \cite{galateau}, p.~779). 
{In particular,  our method could treat the case of general abelian varieties, if the Lehmer and Bogomolov type bounds were known in such generality.}

In Theorem \ref{curva}, proved in Section \ref{lacurva}, we expand our method in order to get some new effective results for curves in abelian varieties. This is particularly relevant, as bounds for the height in weak-transverse curves are  hard to obtain. For instance, such bounds allow us to  deduce some cases of the effective Mordell-Lang Conjecture, stated in Corollary \ref{MLR}.
The two classical  approaches to the effective Mordell-Lang Conjecture in abelian varieties are the Chabauty-Coleman and the Manin-Demjanenko methods. These methods require hypotheses that are similar to ours, but our result is of easier application {and more explicit}.
Finally in Section  \ref{dacurve}, we give some {generalisations} to varieties in abelian vareties.

In particular we prove the following results. We fix an isogeny of the CM abelian variety $A$ to the product 
  $\prod_{i=1}^\ell A_i^{e_i}$ of {non-isogenous} simple factors  $A_i$ of dimension $g_i$. Since isogenies preserve finiteness, without loss of generality, we identify $A$ with $\prod_{i=1}^\ell A_i^{e_i}$.
If $H\subseteq A$ is a subgroup, then $H=H_1\times \cdots \times H_\ell$, where $H_i\subseteq A_i^{e_i}$ is isogenous to $A_i^{f_i}$ for some $f_i\leq e_i$; therefore the matrix of the coefficients of the forms defining $H$ has a structure of a  block diagonal matrix, with entries in the endomorphism ring of the corresponding varieties.
{We can now state our effective result for weak-transverse curves, which is an example for the effective Zilber-Pink Conjecture.}
\begin{thm}
\label{curva}
 Let $C\subseteq A=\prod_{i=1}^\ell A_i^{e_i}$ be a weak-transverse curve. 
Then the set
$$\mathcal{S}(C)= C \cap \left(\bigcup_{H\in \mathcal{F}} H\right)$$ is a set of  bounded N\'eron-Tate height, where 
$\mathcal{F}$ is the family of all subgroups $H=\prod_{i=1}^\ell H_i\subseteq A$ such that \[\codim H_j> g_j \dim H_j\] for at least one index $j$ (here $\codim H_j$ is the codimension of $H_j$ in $A_j^{e_j}$).

More precisely, if $Y\in C\cap H$ then for any real $\eta>0$ there exists a constant, depending only on $A$  and $\eta$, such that
$$\hat{h}(Y)\ll_\eta (h(C)+\deg C)^{\frac{\codim H_j}{\codim H_j-g_j\dim H_j}+\eta}[k_{\mathrm{tor}}(C):k_{\mathrm{tor}}]^{\frac{g_j\dim H_j}{\codim H_j-g_j\dim H_j}+\eta}.$$
\end{thm}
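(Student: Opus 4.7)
The plan is to show that every $Y\in C\cap H$ with $H\in\mathcal{F}$ is a $C$-torsion anomalous point, and then to adapt the quantitative method of Theorem~\ref{main}, case~\eqref{punto}, to extract the stated exponents.

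First, $Y$ is isolated in $C\cap H$: otherwise a positive-dimensional component would coincide with the irreducible curve $C$, forcing $C\subseteq H$ and contradicting the weak-transversity of $C$ (since $H$ is a torsion subvariety). Moreover, as $H_j$ is isogenous to $A_j^{f_j}$ for some $f_j\le e_j$, both $\dim H_j$ and $\codim H_j$ are nonnegative multiples of $g_j$; the hypothesis $\codim H_j>g_j\dim H_j$ then forces $\codim H_j\ge g_j+1\ge 2$, so $\codim H\ge 2$ and $Y$ is $C$-torsion anomalous.

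For the quantitative part, project to the $j$-th factor via $\pi_j\colon A\to A_j^{e_j}$. When $\pi_j(C)$ is a point it must lie in $H_j$ and the conclusion reduces to a lower-dimensional instance; otherwise $\pi_j(C)$ is a weak-transverse curve in $A_j^{e_j}$, since any proper torsion subvariety of $A_j^{e_j}$ containing $\pi_j(C)$ would pull back under $\pi_j$ to a proper torsion subvariety of $A$ containing $C$. Philippon's Arithmetic B\'ezout Theorem applied to the zero-dimensional intersection $\pi_j(C)\cap H_j$, combined with Zhang's inequality to pass between normalised height and N\'eron-Tate height, gives an upper bound of the form $\hat{h}(Y)\ll(h(C)+\deg C)\deg H_j$. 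In the opposite direction, Carrizosa's relative Lehmer bound, applied to $\pi_j(Y)$ inside the CM abelian variety $H_j$ (isogenous to $A_j^{f_j}$, of dimension $g_jf_j$), produces a lower bound on $\hat{h}(Y)$ involving $\deg H_j$ and $[k_\mathrm{tor}(C):k_\mathrm{tor}]$.

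Combining the B\'ezout upper bound with the Lehmer lower bound eliminates $\deg H_j$ and, after rearrangement, yields an inequality of the shape
\[
\hat{h}(Y)^{\codim H_j-g_j\dim H_j}\ \ll\ (h(C)+\deg C)^{\codim H_j}\,[k_\mathrm{tor}(C):k_\mathrm{tor}]^{g_j\dim H_j}.
\]
The hypothesis $\codim H_j>g_j\dim H_j$ is precisely what is needed to make the exponent on the left strictly positive, so taking the corresponding root produces the stated bound.

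The main obstacle is calibrating the Lehmer inequality so that the factor $g_j\dim H_j$ (rather than the naive $\dim H_j$) appears on the right: this uses that the CM ring of $A_j$ has rank $2g_j$ over $\mathbb{Q}$ and acts diagonally on $A_j^{e_j}$, sharpening the Galois-orbit bound by a factor of $g_j$. A further point of care is that when $\pi_j(Y)$ lies in a subtorus of $H_j$ strictly smaller than $H_j$, the Lehmer bound must be applied inside this smaller subtorus; this is handled by picking the minimal such subtorus and checking monotonicity of the exponents in $(\codim H_j,\dim H_j)$ across the admissible pairs.
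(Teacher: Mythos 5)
There is a genuine gap at the quantitative core of your argument. You apply the Arithmetic B\'ezout Theorem directly to $\pi_j(C)\cap H_j$, obtaining $\hat{h}(Y)\ll (h(C)+\deg C)\deg H_j$, and then claim that combining this with Carrizosa's bound ``eliminates $\deg H_j$''. It does not. Carrizosa gives a lower bound of the shape $\hat{h}(Y_j)\gg (\deg B_j)^{1/\dim B_j-\eta}/[k_{\mathrm{tor}}(Y_j):k_{\mathrm{tor}}]^{1/\dim B_j+\eta}$, and to extract an upper bound on $\deg B_j$ by playing this against an upper bound proportional to $(\deg B_j)^{\alpha}$ one needs $\alpha<1/\dim B_j$. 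With $\alpha=1$ this fails for every $\dim H_j\ge 1$: the exponent $1/\dim B_j-1$ is nonpositive, and the ``elimination'' yields a \emph{lower} bound for $\hat{h}(Y)$, not an upper one. The device you are missing --- and it is the engine of the whole paper --- is the auxiliary torsion variety $B_j'$ cut out by the single form $h_1$ of smallest degree among the $r=\codim H_j/g_j$ forms defining the minimal component $B_j+\zeta_j$ of $H_j$ containing $Y_j$. Since $d_1\cdots d_r\ll \deg B_j$, one has $\deg B_j'\ll d_1\ll (\deg B_j)^{1/r}=(\deg B_j)^{g_j/\codim H_j}$, and B\'ezout is applied to $C_j\cap B_j'$ and to the lift $C\cap(A_1^{e_1}\times\cdots\times B_j'\times\cdots\times A_\ell^{e_\ell})$. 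The inequality $g_j/\codim H_j<1/\dim H_j$ that makes the elimination go through is precisely the hypothesis $\codim H_j>g_j\dim H_j$, and this is also where $g_j$ enters the exponents: through the relation between the number of defining forms and the codimension (each form cuts codimension $g_j$ in $A_j^{e_j}$). Your proposed mechanism --- ``sharpening the Galois-orbit bound by a factor of $g_j$'' using the rank of the CM ring --- is not a step in the argument; Carrizosa's theorem is used verbatim.

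A secondary but real gap is the bookkeeping of fields of definition: Carrizosa's bound involves $[k_{\mathrm{tor}}(Y_j):k_{\mathrm{tor}}]$, not $[k_{\mathrm{tor}}(C):k_{\mathrm{tor}}]$ as you assert. The passage between the two requires counting the conjugates of $Y_j$ over $k_{\mathrm{tor}}(C_j)$ as distinct components of $C_j\cap B_j'$ in the B\'ezout step, so that the factor $[k_{\mathrm{tor}}(Y_j):k_{\mathrm{tor}}]/[k_{\mathrm{tor}}(C_j):k_{\mathrm{tor}}]$ appears on the left of the upper bound and cancels against the Lehmer denominator; the residual $[k_{\mathrm{tor}}(Y_j):k_{\mathrm{tor}}]^{\eta}$ must then be absorbed (trivially if $\dim B_j>1$, via an ordinary B\'ezout degree count if $\dim B_j=1$). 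Your remaining points are essentially sound: the reduction to the case where $Y_j$ is non-torsion (the torsion case is handled by a direct B\'ezout argument with $H'=A_1^{e_1}\times\cdots\times\{Y_j\}\times\cdots$), and the need to take $B_j+\zeta_j$ minimal for $Y_j$ before applying Carrizosa, do match the paper.
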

To prove  Theorem \ref{curva} we first work in the projection  on the $j$-th factor of $A$, and then we lift the construction to the variety $A$.

\smallskip 

As an immediate consequence, we deduce the following corollary (proved in Section \ref{morlang}).

Let $\Gamma$ be a subgroup of $A=\prod_{i=1}^\ell A_i^{e_i}$. Assume that the group $\overline{\Gamma_i}<A_i$ generated by the {coordinates of the projections of $\Gamma$ on the factors $A_i^{e_i}$}  is an $\mathrm{End}(A_i)$ module of rank $t_i$. 
\begin{cor}
\label{MLR}
Let $A$ be a CM abelian variety and let $C$ be a weak-transverse curve in $A$. 
 Let $\Gamma$ be {a subgroup} as above, and suppose that
$t_j < \frac{e_j}{g_j +1}$
for some index $j$.
Then, for any positive $\eta$, there exists a  constant depending only on $A$ and $\eta$, such that  the set $C\cap \Gamma $ has  N\'eron-Tate height {bounded  as 
$$\hat{h}(C\cap \Gamma)  \ll_\eta (h(C)+\deg C)^{\frac{e_j-t_j}{e_j-(g_j+1)t_j}+\eta}[k_{\mathrm{tor}}(C):k_{\mathrm{tor}}]^{\frac{g_jt_j}{e_j-(g_j+1)t_j}+\eta}.$$}
\end{cor}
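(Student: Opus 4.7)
The plan is to reduce the statement to Theorem \ref{curva} by exhibiting, for every point $P \in C \cap \Gamma$, a subgroup $H \in \mathcal{F}$ and a torsion point $\zeta$ with $P \in (H+\zeta) \cap C$, and then to invoke the height bound supplied by that theorem. The key observation is that the rank hypothesis on $\overline{\Gamma_j}$ forces the projection of each $P \in \Gamma$ to $A_j^{e_j}$ into a low-dimensional algebraic subgroup up to torsion.

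First I would fix generators $\gamma_1, \ldots, \gamma_{t_j}$ of $\overline{\Gamma_j}$ modulo torsion as an $\mathrm{End}(A_j)$-module. For $P \in \Gamma$, denote its projection onto the $j$-th block by $P_j = (p_1, \ldots, p_{e_j}) \in A_j^{e_j}$. The rank hypothesis yields decompositions $p_i = \sum_{k=1}^{t_j} \alpha_{ik}\gamma_k + \tau_i$ with $\alpha_{ik} \in \mathrm{End}(A_j)$ and $\tau_i$ torsion; assembling the $\alpha_{ik}$ into a matrix produces an $\mathrm{End}(A_j)$-linear morphism $M : A_j^{t_j} \to A_j^{e_j}$ whose image $H_j := M(A_j^{t_j})$ satisfies $P_j \in H_j + \tau_j$. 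Since $A_j$ is simple, Poincar\'e reducibility makes $H_j$ isogenous to $A_j^{f_j}$ for some $f_j \leq t_j$.

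Next I would set $H := A_1^{e_1} \times \cdots \times H_j \times \cdots \times A_\ell^{e_\ell}$, so that $P$ lies in a torsion translate of $H$, and verify that $H \in \mathcal{F}$. With $\dim H_j = g_j f_j$ and $\codim H_j = g_j(e_j - f_j)$, the condition $\codim H_j > g_j \dim H_j$ reduces to $f_j < e_j/(g_j+1)$, which follows from $f_j \leq t_j$ together with the hypothesis $t_j < e_j/(g_j+1)$. Substituting these values in the exponents of Theorem \ref{curva} gives
\[
\frac{\codim H_j}{\codim H_j - g_j \dim H_j} = \frac{e_j - f_j}{e_j - (g_j+1) f_j}, \qquad \frac{g_j \dim H_j}{\codim H_j - g_j \dim H_j} = \frac{g_j f_j}{e_j - (g_j+1) f_j}.
\]
A direct derivative check shows both exponents to be increasing in $f_j$ on $[0, e_j/(g_j+1))$; hence using the worst case $f_j = t_j$ gives exactly the bound claimed in the corollary.

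The hard part will be handling the torsion translate, since Theorem \ref{curva} is phrased for intersections with subgroups rather than with their torsion translates $H + \zeta$. I would circumvent this by applying the theorem to the translated curve $C - \zeta$, which remains weak-transverse, and exploiting the invariance of the N\'eron-Tate height under translation by torsion points together with the fact that $h(C)$, $\deg C$ and $[k_{\mathrm{tor}}(C):k_{\mathrm{tor}}]$ change only by factors depending on $A$. These constants are absorbed into the $\ll_\eta$ notation, yielding precisely the bound in the statement.
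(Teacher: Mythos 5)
Your proposal is correct and follows essentially the same route as the paper: from the rank hypothesis produce a subgroup $H=A_1^{e_1}\times\cdots\times H_j\times\cdots\times A_\ell^{e_\ell}$ with $\codim H_j>g_j\dim H_j$ whose torsion translate contains the point, and invoke Theorem \ref{curva}. One step needs care: since $\overline{\Gamma_j}$ is only assumed to be an $\mathrm{End}(A_j)$-module of rank $t_j$, the coordinates need not decompose as $p_i=\sum_k\alpha_{ik}\gamma_k+\tau_i$; in general one only gets $a_i p_i=\sum_k\alpha_{ik}\gamma_k+\tau_i$ for some nonzero $a_i\in\mathrm{End}(A_j)$, which is how the paper writes it. This is harmless for your argument --- replace your $H_j$ by its preimage under the isogeny $\mathrm{diag}(a_1,\dots,a_{e_j})$, which has the same dimension, so the codimension count and your monotonicity check in $f_j\le t_j$ are unaffected --- but as written the decomposition is not justified. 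The paper instead eliminates the $\gamma_k$ to exhibit $H_j$ as the zero set of $e_j-r_j\ge e_j-t_j$ independent equations with torsion right-hand sides, i.e.\ the kernel description dual to your image construction. Your device of applying Theorem \ref{curva} to $C-\zeta$ is legitimate by Remark \ref{remzero}, though unnecessary: the subgroups in $\mathcal{F}$ need not be connected, so the relevant torsion coset is already contained in a member of $\mathcal{F}$ of the same dimension.
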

We remark that the corollary applies also to {some} $\Gamma$ of infinite rank, indeed we only assume that the rank on one projection is small (see  Remark \ref{rangoinfinito}).

\section{Preliminaries}\label{Prelo}

\subsection{Height and subgroups}\label{sgruppi}
We assume that all varieties are defined over the field of algebraic numbers.  

Let $A$ be an abelian variety {with CM}.  We fix, up to an isogeny, a decomposition of $A=\prod_{i=1}^\ell A_i^{e_i}$ in simple factors, of dimension $\dim A_i=g_i$. We consider an embedding $i_\elle$ of $A$ in $\mathbb{P}^m$ given by a symmetric  ample line bundle $\elle$ on $A$.  
Heights and degrees corresponding to $\elle$ are computed via $i_\elle$. More precisely, the degree of a subvariety of $A$ is the degree of its image under $i_\elle$; $\hat{h}=\hat{h}_\elle$  is the $\elle$-canonical N\'eron-Tate height of a point in $A$, and $h$ is the normalized height of a subvariety of $A$ as defined, for instance, in \cite{patriceI}. Notice that if $x\in A$ is a point, then $\hat{h}(x)=h(x)$.

By  Lemma 2.2 in \cite{Masserwustholz}, if $A$ is an abelian variety defined over a number field $k$, then every abelian subvariety of $A$ is defined over a finite extension of $k$ of degree bounded by $3^{16 (\dim A)^4}$; thus, without loss of generality, we assume that all abelian subvarieties of $A$ are defined over $k$.

Let $B+\zeta$ be an irreducible torsion variety of $A$.  Then $B=B_1\times \ldots \times B_l$ where $B_i\subseteq A_i^{e_i}$ is isogenous to $A_i^{f_i}$ for some integer $0\leq f_i\leq e_i$.
 
There exists a natural correspondence between abelian subvarieties $B$ of $A$, morphisms from $A$ to $\prod_{i=1}^\ell A_i^{e_i-f_i}$, and matrices made of $\ell$ blocks, where the $i$-th block is an $(e_i-f_i)\times e_i$-matrix with entries in the endomorphism ring of $A_i$. For details on such a correspondence see for instance \cite{FuntorialeAV}, Section 2.5. In short, the abelian subvariety $B$ defines the 
 projection morphism $\pi_B:A\rightarrow A/B$. The  successive minima of $B$  gives a matrix $\mathcal{H}_B$ of the above type. By multiplication on the left, the matrix $\mathcal{H}_B$  gives a morphism $\Phi_B$ from $A$ to $\prod_{i=1}^\ell A_i^{e_i-f_i}$, with $B$ the zero component of $\ker \Phi_B$. 
 
By Minkowski's theorem,  $\deg B$ is (up to constants depending only on $A$) the product of the squares of the norms of the
rows of $\mathcal{H}_B$. In addition, $B$ is the zero component of the   zero set of the forms $h_1,   \dots , h_r$ corresponding to the rows of $\mathcal{H}_B$. We order the $h_i$'s  by increasing {degrees}  $d_i$, {so that} $$d_1  \cdots d_r \ll \deg (B+\zeta)\ll d_1  \cdots d_r  .$$

We also recall that,  from \cite{Masserwustholz}, Lemma 1.3 and Lemma 1.4, if $B$ is an abelian  subvariety of $A$, and $B^\perp$  is its orthogonal complement, then $\deg B^\perp \ll \deg B$, and therefore $\sharp(B\cap B^\perp)\ll (\deg B)^2$.

\subsection{Torsion anomalous varieties}\label{pre}
 We recall some preliminary lemmas on torsion anomalous varieties used for our geometric constructions in the following sections. 
\begin{lem}
[Lemma 3.5 of \cite{TAI}]\label{wt}
Let $Y$ be a maximal $V$-torsion anomalous variety and  let $B+\zeta$  be minimal for $Y$. Then $Y$ is weak-transverse in $B+\zeta$ {(i.e. $Y$ is not contained in any  proper torsion subvariety of $B+\zeta$)}.\end{lem}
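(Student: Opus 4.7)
The plan is to argue by contradiction. Suppose that $Y$ is contained in a proper torsion subvariety of $B+\zeta$. Since any irreducible subvariety contained in a finite union of torsion translates lies in one of them, we may assume there is an abelian subvariety $B'\subseteq B$ with $\dim B' < \dim B$ and a torsion point $\zeta'$ with $B'+\zeta'\subseteq B+\zeta$ such that $Y\subseteq B'+\zeta'$. Since $Y\subseteq V$, this gives $Y\subseteq V\cap(B'+\zeta')$, and I pick an irreducible component $Y'$ of $V\cap(B'+\zeta')$ containing $Y$.

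I would then split into two cases according to whether $\dim Y'=\dim Y$ or $\dim Y'>\dim Y$.

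In the first case I have $Y=Y'$, so $Y$ itself is an irreducible component of $V\cap(B'+\zeta')$. Because $B'\subsetneq B$, I have $\codim B'>\codim B$, and combining with the torsion-anomalous inequality $\codim Y<\codim V+\codim B$ already satisfied by $Y$ in $B+\zeta$, I get $\codim Y<\codim V+\codim B<\codim V+\codim B'$. Hence $B'+\zeta'$ also witnesses that $Y$ is $V$-torsion anomalous, but $\dim(B'+\zeta')<\dim(B+\zeta)$, contradicting the minimality of $B+\zeta$.

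In the second case $Y\subsetneq Y'$. Here I need to check that $Y'$ itself is $V$-torsion anomalous, i.e.\ that $\codim Y'<\codim V+\codim B'$, which is equivalent to $\dim Y'>\dim V+\dim B'-\dim A$. Since dimensions are integers, from $\dim Y>\dim V+\dim B-\dim A$ and $\dim B'\leq \dim B-1$ I get $\dim Y'\geq \dim Y+1\geq \dim V+\dim B-\dim A+2>\dim V+\dim B'-\dim A$, so $Y'$ is indeed $V$-torsion anomalous. But then $Y\subsetneq Y'$ contradicts the maximality of $Y$.

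The two cases together yield the desired contradiction. There is no real obstacle here: the only thing to be slightly careful about is the reduction from ``$Y$ is contained in some proper torsion subvariety of $B+\zeta$'' to ``$Y$ is contained in some irreducible translate $B'+\zeta'\subsetneq B+\zeta$'' (which I get from irreducibility of $Y$), and the bookkeeping of strict dimension inequalities in Case 2. Everything else is a direct application of the definitions of minimality and maximality.
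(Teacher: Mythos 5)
Your proof is correct and follows the same route as the proof of Lemma 3.5 in \cite{TAI}: reduce to an irreducible $B'+\zeta'\subsetneq B+\zeta$ containing $Y$, pass to a component $Y'$ of $V\cap(B'+\zeta')$, and derive a contradiction with minimality of $B+\zeta$ when $Y'=Y$ and with maximality of $Y$ when $Y'\supsetneq Y$. The dimension bookkeeping in both cases is accurate, so nothing is missing.
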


\begin{lem}[Lemma 3.6 of \cite{TAI}]\label{cruciale}
Let $Y$ be a maximal $V$-torsion anomalous variety, and let $B+\zeta$ be minimal for $Y$. Then $Y$ is a component of $V\cap (B'+\zeta)$ for every algebraic subgroup $B'\supseteq B$, with $\codim B'\ge \dim V-\dim Y$. 
\end{lem}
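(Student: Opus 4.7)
The plan is to argue by contradiction, extracting from any failure of the claim a $V$-torsion anomalous variety of strictly larger dimension than $Y$. Since $B\subseteq B'$, I have the containments $Y\subseteq V\cap(B+\zeta)\subseteq V\cap(B'+\zeta)$. The coset $B'+\zeta$ may fail to be irreducible if $B'$ is disconnected, but its irreducible components are translates of the identity component of $B'$ by torsion points, and the irreducible variety $Y$ must lie in one of them, namely the component containing $B+\zeta$. Replacing $B'$ by its identity component, I may therefore assume that $B'+\zeta$ is an irreducible torsion variety.

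Next I would let $Y'$ denote the irreducible component of $V\cap(B'+\zeta)$ containing $Y$; the aim is to show $Y=Y'$. Since both varieties are irreducible and $Y\subseteq Y'$, it suffices to prove $\dim Y'\leq\dim Y$. Suppose for contradiction that $\dim Y'>\dim Y$. The hypothesis $\codim B'\geq\dim V-\dim Y$ rewrites as $\dim V-\codim B'\leq\dim Y$, whence
$$\dim Y' \;>\; \dim Y \;\geq\; \dim V-\codim B',$$
which is equivalent to $\codim Y'<\codim V+\codim B'$. Combined with the fact that $Y'$ is an irreducible component of $V\cap(B'+\zeta)$ and $B'+\zeta$ is an irreducible torsion variety, this exhibits $Y'$ as itself a $V$-torsion anomalous subvariety. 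Since $Y'\supsetneq Y$ has strictly larger dimension, this contradicts the maximality of $Y$, completing the proof.

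The argument is essentially a one-line dimension count once the irreducibility of $B'+\zeta$ is arranged, so I do not anticipate any serious obstacle. The only point requiring any care is the initial reduction to the case where $B'$ is connected, which is painless because $Y$ is irreducible and is already contained in a single component of $B'+\zeta$, namely the one containing $B+\zeta$.
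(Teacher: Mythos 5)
Your proof is correct, and it is essentially the argument of Lemma 3.6 in \cite{TAI}: pass to the component $Y'$ of $V\cap(B'+\zeta)$ containing $Y$, note that a strict containment would force $\dim Y'>\dim Y\ge \dim V-\codim B'$, making $Y'$ a $V$-torsion anomalous variety of strictly larger dimension containing $Y$, contradicting maximality. The reduction to the identity component of $B'$ is handled correctly, and the fact that you never invoke the minimality of $B+\zeta$ is not a flaw --- that hypothesis merely reflects the context in which the lemma is applied.
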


 The following lemma is due to Patrice Philippon \cite{Phil_preprint} and to certain properties of orthogonality in the Mordell-Weil groups studied by D. Bertrand \cite{BertOrto}.
 
We recall that the \emph{essential minimum} of a subvariety $X\subseteq A$ is defined as  \[\mu(X)=\sup\{\theta\in \mathbb{R}|\{ x \in X(\overline{\qe}) \mid \hat{h} (x)\le \theta\} \text{ is non dense in $X$}\}.\]

\begin{lem}[Philippon, Bertrand]\label{minimoess}
 Let $H+Y_0$ be a weak-transverse translate in  $A$, with  $Y_0$ a point in  the orthogonal complement $H^\perp$ of $H$. Then  $\mu(Y_0)= \mu(H+Y_0).$
 \end{lem}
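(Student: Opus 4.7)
The plan is to prove the equality $\mu(Y_0)=\mu(H+Y_0)$ by exploiting the additivity of the N\'eron-Tate height with respect to the Poincar\'e decomposition $A\sim H\times H^\perp$ induced by the polarization $\elle$. Since $Y_0$ is a single point, $\mu(Y_0)=\hat{h}(Y_0)$ holds directly from the definition: the only non-dense sublevel set of $\{Y_0\}$ is the empty one, obtained exactly when $\theta<\hat{h}(Y_0)$. So the task reduces to showing $\mu(H+Y_0)=\hat{h}(Y_0)$.

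For the upper bound $\mu(H+Y_0)\le\hat{h}(Y_0)$ I would use density of torsion. The set $H_{\mathrm{tors}}+Y_0$ is Zariski dense in $H+Y_0$, and for every torsion point $\zeta\in H$ one has $\hat{h}(\zeta+Y_0)=\hat{h}(Y_0)$, since translation by a torsion point does not change the N\'eron-Tate height (a standard consequence of the quadraticity of $\hat{h}$ and of $\hat{h}(\zeta)=0$). Hence, already for $\theta=\hat{h}(Y_0)$ the level set $\{p\in H+Y_0:\hat{h}(p)\le\theta\}$ contains the dense subset $H_{\mathrm{tors}}+Y_0$, which forces $\mu(H+Y_0)\le\hat{h}(Y_0)$.

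The core of the argument is the lower bound $\mu(H+Y_0)\ge\hat{h}(Y_0)$, for which I would establish the stronger pointwise statement that $\hat{h}(p)\ge\hat{h}(Y_0)$ for every $p\in H+Y_0$. Writing $p=h+Y_0$ with $h\in H$, the defining property of the orthogonal complement $H^\perp$ with respect to the polarization $\elle$ is precisely that the associated N\'eron-Tate bilinear form vanishes on $H\times H^\perp$; this is Bertrand's orthogonality, obtained from the Poincar\'e decomposition of $\elle$ under the sum isogeny $H\times H^\perp\to A$. Equivalently, the canonical height decomposes additively,
\[
\hat{h}(h+y)=\hat{h}(h)+\hat{h}(y)\qquad\text{for all }h\in H,\ y\in H^\perp.
\]
Taking $y=Y_0$ gives $\hat{h}(p)=\hat{h}(h)+\hat{h}(Y_0)\ge\hat{h}(Y_0)$, so for any $\theta<\hat{h}(Y_0)$ the sublevel set of $H+Y_0$ is empty, hence not dense. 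This gives $\mu(H+Y_0)\ge\hat{h}(Y_0)$ and combines with the upper bound to finish the proof.

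The delicate step is the orthogonal decomposition in the third paragraph: one has to check carefully that the polarization-induced definition of $H^\perp$ really makes the bilinear height pairing vanish on $H\times H^\perp$ (as opposed to vanishing only modulo the finite subgroup $H\cap H^\perp$), and that the N\'eron-Tate height of $Y_0\in H^\perp$ computed from the ambient $\elle$ agrees with the one computed from $\elle|_{H^\perp}$. This is the content of Philippon's note and of Bertrand's orthogonality results in Mordell-Weil groups, on which I would rely as a black box. I would also note that weak-transversality of $H+Y_0$ is not used in the proof of the equality itself: it only ensures $\hat{h}(Y_0)>0$, so that the conclusion is quantitatively meaningful (otherwise $Y_0$ would be torsion and $H+Y_0$ a torsion subvariety).
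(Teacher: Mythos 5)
Your argument is correct, and it rests on exactly the two ingredients the paper invokes: the paper gives no proof of this lemma, citing it to Philippon's note and to Bertrand's theorem that the N\'eron--Tate pairing attached to $\elle$ vanishes on $H\times H^\perp$, which is precisely the additivity $\hat{h}(h+Y_0)=\hat{h}(h)+\hat{h}(Y_0)$ you use for the lower bound (and your upper bound via density of $H_{\mathrm{tors}}+Y_0$ is the standard complement). Your closing remarks are also accurate: the bilinearity of the real-valued pairing disposes of the ``modulo $H\cap H^\perp$'' worry, and weak-transversality is only needed to make the conclusion nontrivial, not to prove it.
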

 
We conclude with a remark on translations by torsion points. \begin{remark}\label{remzero}
Notice that, for any subvariety $X$ of $A$,  translations by a torsion point $\zeta$ leave invariant the degree, the field of definition over $k_{\rm{tor}}$ and the normalised height of $X$ (see also \cite{patriceI}, Proposition 9). In addition, if $Y\subseteq V\cap (B+\zeta)$ is $V$-torsion anomalous, then $Y-\zeta\subseteq (V-\zeta)\cap B$  is $(V-\zeta)$-torsion anomalous. Therefore, without loss of generality, we can work in $V$ or in $V-\zeta$, with the advantage, in this last case, that $B$ is an abelian subvariety.\end{remark}

\subsection{Main ingredients}\label{ingredienti}
We recall {here} the main ingredients used in the proof of Theorem \ref{main}. 
\subsubsection{The Zhang Estimate}\label{sezzhang}
The following theorem follows from the crucial result in Zhang's proof of the Bogomolov Conjecture (see 
\cite{ZhangEquidistribution}) and from the definition of normalized height.
\begin{thm}[Zhang]
Let $X\subseteq A$ be an irreducible subvariety.

Then
\begin{equation*}
\mu(X) \le \frac{h(X)}{\deg X} \le (1+\dim X) \mu(X).
\end{equation*}
\end{thm}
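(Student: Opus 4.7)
My plan is to deduce the double inequality from Zhang's theorem of successive minima, the central technical input of his proof of the Bogomolov conjecture in \cite{ZhangEquidistribution}. Setting $d = \dim X$, Zhang attaches to the polarized variety $(X, \elle)$ a non-increasing sequence of successive minima $e_1(X) \ge e_2(X) \ge \cdots \ge e_{d+1}(X) \ge 0$ with $e_1(X) = \mu(X)$, satisfying the fundamental sandwich
\[
e_{d+1}(X) \;\le\; \frac{h(X)}{(d+1)\deg X} \;\le\; \frac{1}{d+1}\sum_{i=1}^{d+1} e_i(X).
\]
The strategy is simply to read off each of the two stated bounds from this inequality, using the definition of the normalized height $h(X)$ to pass freely between the arithmetic-intersection, Chow-form, and Galois-orbit-average presentations.

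The upper bound $h(X)/\deg X \le (1 + \dim X)\mu(X)$ is immediate: since $e_i(X) \le e_1(X) = \mu(X)$ for every $i$, the right-hand term of the display is at most $\mu(X)$, and multiplying through by $(d+1)\deg X$ produces the claim. No additional input beyond the definition of $\mu(X)$ is needed here.

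The lower bound $\mu(X) \le h(X)/\deg X$ is more delicate: the successive minima display alone is too weak, and one must appeal directly to the definition of the normalized height in \cite{patriceI}. The plan is to argue by approximation: for any real $\theta < \mu(X)$, the set $\{x \in X : \hat h(x) \le \theta\}$ is, by definition of the essential minimum, non-dense in $X$, hence contained in a proper closed subvariety $Z \subsetneq X$. Applying an arithmetic Hilbert--Samuel (or generic equidistribution) argument to the dense complement $X \setminus Z$, one derives $h(X)/\deg X \ge \theta$, and letting $\theta \nearrow \mu(X)$ finishes the proof. This left-hand inequality is the main conceptual hurdle, since it rests on unpacking the definition of $h(X)$ through Philippon's arithmetic-intersection construction rather than on a purely formal manipulation; once that definition is in place, Zhang's successive minima theorem delivers both bounds essentially for free.
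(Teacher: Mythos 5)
Your key input, Zhang's theorem of successive minima, is misquoted, and this derails both halves of the argument. Writing $d=\dim X$, the correct statement (Theorem 5.2 of Zhang's \emph{Positive line bundles on arithmetic varieties}, which underlies \cite{ZhangEquidistribution}) is
\[
e_1(X) \;\ge\; \frac{h(X)}{(d+1)\deg X} \;\ge\; \frac{1}{d+1}\sum_{i=1}^{d+1} e_i(X),
\]
i.e.\ the normalised average height is bounded \emph{above} by the first minimum and \emph{below} by the average of all the minima. Your display instead bounds it above by the average and below by $e_{d+1}(X)$; the upper half of your display is false in general (combined with the true theorem it would force $h(X)/((d+1)\deg X)$ to coincide with the average of the minima). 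So your ``immediate'' derivation of $h(X)/\deg X \le (1+\dim X)\mu(X)$ rests on a wrong premise, even though the conclusion is true and follows at once from the correct left-hand inequality together with $e_1(X)=\mu(X)$.

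The second problem is the lower bound. With the theorem correctly stated it is \emph{not} ``more delicate'': on an abelian variety the N\'eron--Tate height is non-negative, so every $e_i(X)\ge 0$, hence $\sum_{i=1}^{d+1}e_i(X)\ge e_1(X)=\mu(X)$, and the right-hand inequality of Zhang's theorem gives $h(X)/\deg X\ge \mu(X)$ directly. Your substitute argument --- take $\theta<\mu(X)$, note that the points of height at most $\theta$ are non-dense, and ``apply an arithmetic Hilbert--Samuel argument to the dense complement to derive $h(X)/\deg X\ge\theta$'' --- is circular: that final step is exactly the content of the lower half of Zhang's theorem of successive minima, and it cannot be obtained merely by unpacking Philippon's definition of $h(X)$. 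For comparison, the paper gives no proof at all here; it simply cites Zhang's result and the definition of the normalised height, which is the honest way to present this statement.
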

\subsubsection{The Arithmetic B\'ezout Theorem}
The following version of the Arithmetic B\'ezout Theorem is due to Philippon \cite{patrice}.

\begin{thm}[Philippon]\label{bezout}
Let $X$ and $Y$ be irreducible subvarieties of the  projective space $\mathbb{P}^n$ defined over $\overline{\mathbb{Q}}$; {let} $Z_1, \dots , Z_g$ be the irreducible components of $X\cap Y$. Then 
$$ \sum_{i=1}^g h(Z_i)\le \deg X h(Y) +\deg Y h(X) +c(n) \deg X \deg Y,$$
where $c(n)$ is a constant depending only on $n$.
\end{thm}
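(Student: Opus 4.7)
My plan is to follow Philippon's Chow-form approach, reducing the statement to the well-understood case of intersection with a single hypersurface, then iterating.

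First step: for an irreducible subvariety $Z\subseteq\mathbb{P}^n$ of dimension $d$, I represent $Z$ by its Chow form $C_Z$, a multihomogeneous polynomial in $d+1$ groups of $n+1$ variables, and take $h(Z)$ to be the normalised Mahler measure of $C_Z$ (summing local Mahler measures over all places of a number field of definition). This is, up to harmless normalisation, the height used in the paper, and its behaviour under intersection is controlled by elimination theory.

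Second step: the heart of the argument is the hypersurface intersection inequality. If $W\subseteq\mathbb{P}^n$ is irreducible, $f$ is a homogeneous polynomial not vanishing on $W$, and $W\cap\{f=0\}=\bigcup_j W_j$ are the irreducible components, then
\[
\sum_j h(W_j)\le\deg W\cdot h(f)+h(W)\cdot\deg f+c'(n)\,\deg W\,\deg f.
\]
This is proved by realising a Chow form of $W\cap\{f=0\}$ as a $U$-resultant specialisation of $C_W$ and $f$, and then comparing Mahler measures via the product formula: non-Archimedean places are handled by Gauss's lemma, while the Archimedean contribution yields the error $c'(n)\deg W\deg f$ coming from standard Mahler-measure estimates for products and for the resultant operation on $(\mathbb{P}^n)^{d+1}$.

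Third step: I write $Y$ as a proper intersection of hypersurfaces. By a generic-position argument in the homogeneous ideal of $Y$, I pick forms $f_1,\dots,f_r$ with $r=\codim Y$ such that, setting $W^{(0)}=X$ and $W^{(j)}=W^{(j-1)}\cap\{f_j=0\}$, each intersection is proper, each component of $W^{(r)}$ lies in $X\cap Y$, and one has uniform estimates $\sum_j\deg f_j\ll\deg Y$ and $\sum_j h(f_j)\ll h(Y)+c''(n)\deg Y$. Then I iterate the hypersurface inequality of step two along the tower $W^{(0)}\supseteq\cdots\supseteq W^{(r)}$, summing over irreducible components at each stage; telescoping, combined with the classical Bézout bound $\deg W^{(j)}\le\deg X\cdot\deg f_1\cdots\deg f_j$ and the estimates on $\sum\deg f_j$, $\sum h(f_j)$, yields the claimed inequality with $c(n)$ gathering the accumulated $c'(n)$ and $c''(n)$ contributions.

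The main technical obstacle is the hypersurface intersection inequality of step two: this is where Archimedean analytic input enters, where the dependence on the ambient dimension $n$ in $c(n)$ is generated, and where the Chow-form formalism must be handled with care. Once this is in hand, the inductive organisation in step three is essentially routine classical elimination theory; the delicate point is merely to ensure that one can choose the auxiliary hypersurfaces $f_j$ with heights and degrees controlled in terms of $h(Y)$ and $\deg Y$, which is a standard consequence of bounds on generators of homogeneous ideals.
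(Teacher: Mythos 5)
The paper offers no proof of this statement: it is quoted verbatim from Philippon's \emph{Sur des hauteurs alternatives III}, so there is no internal argument to compare against, and your proposal must be judged against the known proofs in the literature. Your steps one and two are consistent with Philippon's elimination-theoretic framework (Chow forms, Mahler-measure heights, resultant specialisation for intersection with a single hypersurface), and the hypersurface inequality of step two is indeed a correct and provable statement.

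Step three, however, contains a genuine gap. First, an irreducible $Y$ of codimension $r$ is in general not a set-theoretic complete intersection, so $V(f_1,\dots,f_r)$ contains excess components besides $Y$; one can arrange (generically) that every component of $X\cap Y$ appears among the components of $W^{(r)}$, but the excess components inflate the left-hand side you are bounding and must be discarded with care. More seriously, the telescoping does not produce the claimed linear dependence on $h(Y)$. At stage $j$ the hypersurface inequality contributes a term $\bigl(\sum h(W^{(j-1)})\bigr)\deg f_j$ and a term $\bigl(\sum \deg W^{(j-1)}\bigr) h(f_j)$; unwinding the recursion, the coefficient of $h(f_j)$ in the final bound is of size $\deg X\prod_{i\ne j}\deg f_i$, and the coefficient of $h(X)$ is $\prod_i\deg f_i$. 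Even granting $\sum_j h(f_j)\ll h(Y)+\deg Y$ and $\sum_j\deg f_j\ll\deg Y$, this yields something like $\deg X\,(\deg Y)^{r-1}h(Y)+h(X)(\deg Y)^{r}$, not $\deg X\,h(Y)+h(Y)\deg X$; the degrees accumulate multiplicatively and the inequality you want is strictly stronger than what the iteration gives. The standard way around this (Philippon's, and also Bost--Gillet--Soul\'e's) is the ruled join: embed $X\sharp Y\subseteq\mathbb{P}^{2n+1}$, prove $h(X\sharp Y)\le\deg X\,h(Y)+\deg Y\,h(X)+c(n)\deg X\deg Y$ directly from the multiplicativity properties of Chow forms under joins, and then recover $X\cap Y$ as $(X\sharp Y)\cap\Delta$ where the diagonal $\Delta$ is cut out by $n+1$ \emph{linear} forms of height zero; iterating your step two against hyperplanes costs only the additive error $c(n)\deg X\deg Y$ and no multiplicative loss. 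You should replace step three by this diagonal reduction.
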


\subsubsection{An effective Bogomolov Estimate}
The following sharp Bogomolov bound is proved in  \cite{FuntorialeAV} and generalises a result of A. Galateau \cite{galateau}.
\begin{thm}[Checcoli, Veneziano, Viada] \label{due}
 Let $A$ be an abelian variety with a positive density of ordinary primes, and let $Y$ be an irreducible  subvariety of $A$  transverse in a translate $\sotto +p$. Then, for any $\eta>0$, there exists a positive constant $\ccinque$  depending on $A$ and $\eta$, such that
$$\mu (Y) \ge \ccinque  \frac{
(\deg \sotto)^{\frac{1}{\dim B-\dim Y}-\eta}
}{
(\deg Y)^{{\frac{1}{\dim B-\dim Y}}+ \eta}
}.$$
  \end{thm}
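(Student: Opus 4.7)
The plan is to extend Galateau's Bogomolov-type argument via ordinary primes, while carefully tracking the dependence on $\deg B$ to obtain the sharp exponent $1/(\dim B - \dim Y)$. First I would reduce to the case $p = 0$, so that $Y \subseteq B$ is transverse in the abelian subvariety $B$: translation by a torsion point preserves all relevant invariants (Remark \ref{remzero}), and a translation in the orthogonal complement $B^\perp$ modifies the essential minimum in a controlled way. The target then becomes an effective Bogomolov-type lower bound on $\mu(Y)$ for $Y$ transverse in $B$, with constants depending only on the ambient $A$ rather than on $B$.

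The core argument exploits the positive density of ordinary primes. For each ordinary prime $\ell$ of good reduction with residue field of cardinality $q$, Serre--Tate theory produces a canonical Frobenius lift $\varphi_\ell$ which, modulo a prime above $\ell$, acts as the geometric Frobenius and in particular essentially as multiplication by $q$ on the formal group. For a point $y \in Y(\overline{\mathbb{Q}})$ of small canonical height, one obtains relations of the form $\varphi_\ell(y) - [q]y \equiv 0 \pmod{\ell}$, which couple Galois conjugates of $y$ to integer multiples and create an arithmetic obstruction to the existence of too many small points on $Y$.

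I would then run the standard small-points dichotomy. Assuming for contradiction that $Y$ has a Zariski-dense family of points with height below the claimed bound, one selects a suitable finite set of ordinary primes and uses the associated Frobenius lifts to construct intersections of $Y$ with translates of $[q]Y$ (or with auxiliary divisors cut out by the Frobenius congruences). By the Arithmetic B\'ezout Theorem (Theorem \ref{bezout}) together with Zhang's inequality, applied inside $B$ rather than inside $A$, these intersections have controlled geometric and arithmetic degrees. Balancing dimensions and optimising the number of primes used against the parameter $\eta$ makes the two estimates incompatible unless $\hat h(y)$ already satisfies the claimed lower bound.

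The main obstacle, and the source of the refinement over Galateau, is to perform the intersection estimates relative to the ambient $B$ so that $\deg B$ enters the final bound only through the numerator with the sharp exponent. This requires functorial bounds for normalised heights of components of intersections under the inclusion $B \hookrightarrow A$, obtained through the isogeny decomposition $A \sim B \times B^\perp$ and the orthogonality estimates $\deg B^\perp \ll \deg B$ and $\sharp(B\cap B^\perp)\ll (\deg B)^2$ recalled in Section \ref{sgruppi}, together with a careful bookkeeping of constants through the auxiliary polynomial construction cutting out $B$ from the successive minima matrix $\mathcal{H}_B$.
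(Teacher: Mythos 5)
The paper does not prove this statement: Theorem \ref{due} is imported verbatim from \cite{FuntorialeAV} (Checcoli--Veneziano--Viada, \emph{A sharp Bogomolov-type bound}) and is listed in Section \ref{ingredienti} only as an ingredient. So there is no internal proof to compare against, and your proposal must be judged as a free-standing argument. As such it has a genuine gap. You correctly identify the provenance (Galateau's effective Bogomolov estimate at ordinary primes), but the engine of that method is missing from your sketch: Galateau's proof is a transcendence-type construction --- a Siegel-lemma/slopes argument producing an auxiliary section of small height, extrapolation using the congruence between the Serre--Tate Frobenius lift and the geometric Frobenius modulo an ordinary prime to force high-order vanishing on many translates, and a zero (multiplicity) estimate to reach a contradiction. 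Your replacement of this by ``Arithmetic B\'ezout plus Zhang applied to $Y\cap[q]Y$'' cannot work: those tools only give \emph{upper} bounds for heights of intersection components, and nothing in your sketch converts the congruence $\varphi_\ell(y)\equiv[q]y \pmod{\ell}$ into the required \emph{lower} bound for $\mu(Y)$.

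Moreover, the actual content of \cite{FuntorialeAV} beyond Galateau is precisely the appearance of $(\deg B)^{1/(\dim B-\dim Y)}$ in the numerator with a constant depending only on $A$, and this is obtained there not by re-running the extrapolation with extra bookkeeping (as you propose) but by a functorial reduction: one applies Galateau's theorem in $A$ to an auxiliary variety built from $Y$ and the orthogonal complement $B^\perp$, and transfers essential minima and degrees through the isogeny $B\times B^\perp\to A$ using Lemma \ref{minimoess}, $\deg B^\perp\ll\deg B$ and $\sharp(B\cap B^\perp)\ll(\deg B)^2$. Your last paragraph gestures at these orthogonality estimates, but attaches them to a hypothetical ``auxiliary polynomial construction cutting out $B$'' that is never specified; as written, the step where the sharp exponent in $\deg B$ is extracted is asserted rather than proved. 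If you want a complete argument, either carry out the reduction to Galateau's theorem explicitly, or cite \cite{FuntorialeAV} as the paper does.
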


\subsubsection{A relative Lehmer Estimate}\label{sezcarri}
The following Lehmer bound is proved in \cite{carrizosaIMRN}.
\begin{thm}[Carrizosa]\label{carri2}
Let $A$ be an abelian variety with CM defined over a {number} field $k$, and let $k_\mathrm{tor}$ be the field of definition of all torsion points of $A$. Let $P$ be a point of infinite order in $A$, and let $B+\zeta$ be the torsion variety of minimal dimension containing $P$, with $B$ an abelian subvariety and $\zeta$ a torsion point. Then for every $\eta>0$ there exists a positive constant $\csei$ depending on $A$ and $\eta$, such that
\[
\hat{h}(P)\geq \csei  \frac{(\deg B)^{\frac{1}{\dim B}-\eta}}{[k_\mathrm{tor}(P):k_\mathrm{tor}]^{\frac{1}{\dim B}+ \eta}}.
\]
\end{thm}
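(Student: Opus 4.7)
The plan is to follow the transcendence-theoretic template for Lehmer-type inequalities pioneered by Dobrowolski and adapted to abelian varieties by David--Hindry and Ratazzi; Carrizosa's contribution is to make the bound \emph{relative}, replacing the ambient $\dim A$ with the dimension of the minimal torsion variety $B+\zeta$ containing $P$. I would argue by contradiction: assume that
\[
\hat h(P) < c\,\frac{(\deg B)^{1/\dim B-\eta}}{[k_{\mathrm{tor}}(P):k_{\mathrm{tor}}]^{1/\dim B+\eta}}
\]
for a sufficiently small constant $c$, and then construct an auxiliary section whose existence forces a positive-dimensional torsion subvariety of $B$ to pass through $P-\zeta$, contradicting the minimality of $B+\zeta$.

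For the setup, translating by $\zeta$---which by Remark~\ref{remzero} preserves the canonical height, the degree and the relevant fields up to harmless constants---replaces $(P,B+\zeta)$ with $(P-\zeta,B)$, so I may work inside the abelian subvariety $B$. The minimality of $B+\zeta$ is exactly the statement that $P-\zeta$ is weak-transverse in $B$, which is the geometric input needed at the final zero-estimate step. The CM hypothesis then provides a rich supply of endomorphisms of $B$: for each rational prime $p$ in a positive-density set of ordinary primes of $A$, one has $\phi_p\in\mathrm{End}(B)$ of norm comparable to $p$ whose reduction modulo a prime above $p$ is a Frobenius, and for every integer $N$ this yields roughly $N^{\dim B}$ endomorphisms of $B$ of bounded degree, together with a large controlled Galois orbit of $P$.

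The core arithmetic construction is a standard three-step combination. First, an arithmetic Siegel lemma produces a non-zero section $F\in H^0(B,\mathcal{L}^{\otimes D})$ vanishing to order $\geq T$ at every multiple $[m]P$ with $|m|\leq S$, with coefficient-height controlled essentially by $T\hat h(P)$. Second, an extrapolation step propagates this vanishing to Galois conjugates of $P$ via the congruence $\sigma([m]P)\equiv \phi_p([m]P)\pmod{\mathfrak p}$, using a Schwarz lemma on the formal group and a norm-form computation on the tangent space of $B$ to gain a multiplicative factor at each prime. Third, Philippon's zero estimate in commutative algebraic groups implies that if the final set of vanishing points is too large, then the zero locus of $F$ must contain a proper algebraic subgroup of $B$ through $P-\zeta$, which by the weak-transversality of $P-\zeta$ in $B$ is impossible. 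This yields the desired contradiction provided the assumed upper bound on $\hat h(P)$ was sharp enough.

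The main obstacle is the parameter optimisation and, above all, the refined bookkeeping that keeps $\deg B$ (and not $\deg A$) as the geometric invariant of the final bound. Each extrapolation step gains a factor controlled by $p^{1/\dim B}$ from the action of $\phi_p$ on the tangent space, the number of available steps is driven by $\log[k_{\mathrm{tor}}(P):k_{\mathrm{tor}}]$, and the Siegel input contributes $T\hat h(P)$; choosing $S,T,D$ so that all of these balance against $\dim B$ in the zero estimate produces exactly the sharp exponents $1/\dim B\mp\eta$. The transcendence skeleton is by now classical, but the relative refinement demands that the zero estimate be applied inside $B$, that the extrapolation respect the subgroup structure, and that the auxiliary data scale with $\deg B$ rather than $\deg A$; this is the technically delicate part, and also the reason the argument works only for CM abelian varieties, where endomorphisms of $B$ with the required arithmetic properties are sufficiently plentiful.
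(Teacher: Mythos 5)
The paper does not prove this statement: Theorem~\ref{carri2} is imported verbatim from Carrizosa's work \cite{carrizosaIMRN} (building on David--Hindry), and is used as a black box in the proofs of Theorems~\ref{main}, \ref{curva} and \ref{t51}. So there is no internal proof to compare against; what you have written is a sketch of the external proof, and it does correctly identify its architecture: Dobrowolski-style auxiliary construction, extrapolation via CM endomorphisms reducing to Frobenius at ordinary primes, Philippon's zero estimate, and a reduction to the minimal torsion variety $B+\zeta$ so that the exponent is $1/\dim B$ rather than $1/\dim A$.

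As a proof, however, the sketch has genuine gaps precisely where the theorem is hard. First, the extrapolation step you describe is the \emph{absolute} Dobrowolski argument; in the relative setting one bounds $[k_{\mathrm{tor}}(P):k_{\mathrm{tor}}]$, and $k_{\mathrm{tor}}$ is an infinite, everywhere-ramified extension, so "Frobenius at a prime above $p$" and the counting of conjugates of $P$ must be replaced by a much finer analysis of the Galois action of $\Gal(\overline{k}/k_{\mathrm{tor}})$ and of ramification in the CM fields; this is the central technical point of Carrizosa's paper and it is entirely elided. Second, the zero estimate does not directly produce "a positive-dimensional torsion subvariety of $B$ through $P-\zeta$": it produces an obstruction subgroup whose degree and codimension must be played against the parameters $D,S,T$, and the minimality of $B$ enters through a descent/induction on the obstruction, not through a one-line contradiction with weak-transversality. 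Third, the parameter optimisation that yields exactly the exponents $1/\dim B\mp\eta$, and the bookkeeping that makes the constant depend only on $A$ and $\eta$ (uniformly in $B$ and $P$), are asserted rather than carried out. In short, the roadmap is the right one, but none of the decisive estimates is actually established, so the statement remains, as in the paper, a citation rather than a proof.
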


\section{Finitely many maximal $V$-torsion anomalous varieties in $V\cap (B+\mathrm{Tor}_A)$}
Let $V$ be a weak-transverse variety in an abelian variety $A$. Let us fix an abelian subvariety $B$ of $A$. In  \cite{TAI}, Lemma 3.9. we proved that there are only finitely many $\zeta \in \mathrm{Tor}_{B^{\perp}} $  such that  $V \cap (B + \zeta)$ has a maximal $V$ -torsion anomalous component. In this section, Proposition \ref{sottogruppo},  we prove that the number of such $\zeta$ is effectively bounded in terms of $\deg V$, $\deg B$ and some constants depending on $A$. We thank the referee for pointing out the effectivity question and for his useful comments.

The proof of such an effective  result is based on an induction on the dimension of $V$,  on
 R\'emond's quantitative version of the Manin-Mumford Conjecture \cite{Rem00} and on the effective bound for the degree of the maximal translates in a variety implied, for instance,  by a result of Bombieri and Zannier \cite{BZ}.
We first recall these results and some other well known bounds. 

 Recall that  $A$ is abelian variety and  $\elle$ is a symmetric ample line bundle on $A$. We denote by $h_1(A)$ the projective height of the zero of $A$ in the embedding associated with $\elle^{\otimes 16}$ (as defined in \cite{DP2}, Notation 3.2) and by $d_A$  the degree of the field of definition of $A$. 
{If $G$ is an abelian subvariety of $A$ or a quotient of $A$, then $h_1(G)$ is bounded in terms of $h_1(A)$, $\deg A$, $\dim A$ and $\deg G$ (see \cite{DP2}, Proposition 3.9).}

Moreover, in several works, Masser and W\"ustholz, and then other authors, proved that 
for any abelian  subvariety $G$ of $A$, the degree of the field of definition of $G$ is at most $3^{16 ({\dim A})^4}d_A $ 
(see \cite{Masserwustholz} Lemma 2.2). 
Below, we  sum up these bounds.

\begin{est}\label{cG}  {If $G$ is an abelian subvariety of $A$ or a quotient of $A$, then:
\begin{itemize}
\item $d_G$ is bounded in terms of $d_A$ and $\dim A$;
\item $h_1(G)$ is bounded in terms of  $h_1(A)$, $\deg A$, $\dim A$ and $\deg G$.
\end{itemize}}
\end{est}

For simplicity, in what follows we shall denote by  $c(A)=c(\dim A, d_A, h_1(A),\deg A)$ any constant depending on $\dim A, d_A, h_1(A)$ and $\deg A$.              

We recall the  following  consequence of  R\'emond's result,  \cite{Rem00} Theorem 1.2. 
\begin{est}\label{lemrem}
The number of irreducible components of the closure of the torsion of a weak-transverse variety $V$ in an abelian variety $A$ is effectively bounded  as \begin{equation*}\label{b_rem}c(A)(\deg V)^{(\dim A)^{5(\dim V+1)^2}}.\end{equation*}  
\end{est}
Following the  work of R\'emond \cite{Rem00} Theorem 2.1 and the results in \cite{DP2} one sees that  if $G$ is an abelian subvariety of $A$ or a quotient of $A$, then {the corresponding constant $c(G)$ appearing in Estimate \ref{lemrem} is bounded  only in terms of $\dim A, d_A, h_1(A)$, $\deg A$ and $\deg G$. }

In our previous work  joint  with F. Veneziano \cite{TAI}, Lemma 7.4, we gave an explicit version of  a corollary of Lemma 2 in  \cite{BZ}. This is a  bound for the degree of the maximal translates contained in a variety, and so in particular for the degree of each component of the closure of the torsion.  More precisely:
\begin{est}\label{lemBZ}
If $V$ is weak-transverse in an abelian variety $A$, then the
maximal translates contained in $V$ have degree bounded by $c(A)(\deg V)^{2^{\dim V}}$.
 \end{est}
 Notice that, if $\zeta $ is a torsion point such that $V\cap (B+\zeta)$ has a $V$-torsion anomalous component, then all the points in $\zeta +(B\cap B^{\perp})$ share the same property. Indeed  $B+\zeta=B+\zeta+(B\cap B^{\perp})$. Clearly, we shall avoid such a redundancy and work  up to  points in $B\cap B^{\perp}$.   Nevertheless, $|B\cap B^{\perp}|\ll(\deg B)^2$.  This makes the following definition consistent.
 
 \begin{D}
Let $B$ be an abelian subvariety of an abelian variety $A$. Let $V$ be a  weak-transverse subvariety of $A$. We denote by $Z_{V,A}$ the set of torsion points $\zeta\in B^{\perp}/B\cap B^{\perp}$ such that  $V\cap (B+\zeta)$ has a maximal $V$-torsion anomalous component $Y_\zeta$. \end{D}
{We point out that  the set $Z_{V,A}$ also depends on  $B$. However, in our proof  $B$ is fixed, while $V$ and $A$ vary. To simplify the notation we only indicate the dependence on $V$ and $A$.}

In the following proposition we estimate the number of points in $Z_{V,A}$. The number of maximal $V$-torsion anomalous components in $V\cap (B+{\rm Tor}_A)$ is clearly estimated by $|Z_{V,A}|\deg V \deg B$.

\begin{propo}\label{sottogruppo}
 Let $\sotto$ be an abelian subvariety of an abelian variety $A$. Let $V$ be weak-transverse in $A$. Then the cardinality  of $Z_{V,A}$  is effectively bounded in terms of $\deg V, \deg B$ and constants  depending only on $\dim A, h_1(A), d_A$ and $\deg A$.
\end{propo}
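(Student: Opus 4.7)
The plan is to proceed by induction on $\dim V$. The base case $\dim V = 0$ is immediate: $V$ is a single torsion point, so at most one class $\zeta \in B^{\perp}/(B\cap B^{\perp})$ places it inside $B+\zeta$, giving $|Z_{V,A}|\leq 1$.

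For the inductive step I would project to $A/B$ along $\pi_B$ and study $V':=\pi_B(V)\subseteq A/B$. Since $V$ is irreducible and weak-transverse in $A$, the image $V'$ is irreducible and weak-transverse in $A/B$: any proper torsion subvariety of $A/B$ containing $V'$ would pull back to a proper torsion subvariety of $A$ containing $V$. Moreover $\pi_B$ restricted to $B^{\perp}$ is an isogeny with kernel $B\cap B^{\perp}$, so distinct classes $\zeta$ in $Z_{V,A}$ inject into torsion points of $V'$, and it suffices to bound the number of $\xi\in V'\cap \mathrm{Tor}_{A/B}$ arising as $\pi_B(\zeta)$ for some $\zeta\in Z_{V,A}$. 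Applying R\'emond's theorem (Estimate \ref{lemrem}) to $V'$ in $A/B$, the torsion closure of $V'$ is a union of at most $c(A/B)(\deg V')^{(\dim A/B)^{5(\dim V'+1)^2}}$ irreducible torsion subvarieties $T_1,\ldots,T_s$. Estimate \ref{cG} controls $c(A/B)$ and $\deg V'$ in terms of $\deg V$, $\deg B$ and invariants of $A$, and the zero-dimensional $T_j$ already contribute at most $s$ elements to $Z_{V,A}$.

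For each positive-dimensional $T_j$ I would write $\pi_B^{-1}(T_j)=B_j+\eta_j'$ with $B_j\supsetneq B$ an abelian subvariety, and consider $W_j:=V\cap(B_j+\eta_j')$. Since $T_j$ is a proper subvariety of $V'$ by weak-transversality, $W_j$ is a proper subvariety of $V$, hence $\dim W_j<\dim V$. Any $\zeta\in Z_{V,A}$ with $\pi_B(\zeta)\in T_j$ has $Y_\zeta\subseteq W_j$, and so $Y_\zeta$ is contained in an irreducible component $W_j^{(k)}$ of $W_j$. If $W_j^{(k)}$ is itself $V$-torsion anomalous and strictly contains $Y_\zeta$, the maximality of $Y_\zeta$ is violated; hence either $Y_\zeta=W_j^{(k)}$, in which case the contribution is bounded by the number of components of $W_j$, and therefore by $\deg W_j \ll \deg V\cdot \deg B \cdot \deg T_j$, or $W_j^{(k)}$ has the expected codimension in $V$. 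In this second situation I would translate by $-\eta_j'$ to land inside $B_j$ and split further: if $W_j^{(k)}-\eta_j'$ is a translate in $V-\eta_j'$, Estimate \ref{lemBZ} applied to $V$ bounds the degrees and hence the number of such maximal translates and each yields boundedly many $\zeta$; otherwise $W_j^{(k)}-\eta_j'$, after quotienting by its stabilizer if needed, is a weak-transverse subvariety of strictly smaller dimension in an abelian subvariety of $A$, to which the inductive hypothesis applies with the inherited subgroup $B\subseteq B_j$. Summing over the effectively bounded family of $T_j$'s produces the desired bound on $|Z_{V,A}|$.

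The main obstacle is the bookkeeping of the recursion. At every step one must verify that the inner variety to which the inductive hypothesis is applied is genuinely weak-transverse in an abelian subvariety of $A$ whose invariants $\dim$, $d$, $h_1$, $\deg$ are all effectively bounded in terms of the original data via Estimate \ref{cG}, so that the polynomial-type estimates of R\'emond and Bombieri--Zannier compose into a single final bound in $\deg V$, $\deg B$ and invariants of $A$ alone. In practice this requires careful control on how degrees of preimages and projections, as well as heights of torsion translates, propagate through each iteration; once that is in place, the estimate is effective and of the form asserted.
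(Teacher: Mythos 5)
Your skeleton is the same as the paper's (induction on $\dim V$, projection by $\pi_B$, R\'emond's estimate for the torsion components of the image, recursion on $W=V\cap\pi_B^{-1}(T)$), but there is a genuine gap at the very first step of the inductive argument: you apply Estimate \ref{lemrem} to $V'=\pi_B(V)$, and your claim that each torsion component $T_j$ is a \emph{proper} subvariety of $V'$ rests on $V'$ being a proper weak-transverse subvariety of $A/B$. When the restriction $f=\pi_B|_V$ is dominant (e.g.\ $V$ a hypersurface and $B$ a curve with some $B+\zeta\subseteq V$, a situation in which $Z_{V,A}$ is certainly nonempty), one has $V'=A/B$, its torsion closure is all of $A/B$, R\'emond gives nothing, and $W_1=V$ so the dimension does not drop. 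The paper's proof inserts precisely the missing idea here: a fiber $V\cap(B+\zeta)$ containing a torsion-anomalous component has dimension strictly larger than the generic fiber dimension $\dim V-\codim B$, so by the fiber dimension theorem $\pi_B(\zeta)$ must lie in the proper closed subset $V_\pi\subsetneq \pi_B(V)$ of points without generic fiber, whose degree is controlled; R\'emond is then applied to $V_\pi$, not to $\pi_B(V)$. Without this reduction your induction does not get off the ground in the dominant case.

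A second, smaller gap is in the recursion itself. To invoke the inductive hypothesis on a component $X=W_j^{(k)}$ strictly containing some $Y_{\zeta_0}$ you must show that $X-\zeta_0$ is weak-transverse in the abelian variety $\pi_B^{-1}(T_j)-\zeta_0$, and also that each maximal $V$-torsion anomalous $Y_\zeta\subseteq X$ gives a \emph{maximal} $(X-\zeta_0)$-torsion anomalous variety there, so that $|Z_X|$ is really bounded by the inductively controlled quantity. Your proposed detour (quotienting by a stabilizer, and invoking Estimate \ref{lemBZ} when the component is a translate) does not address either point: the issue is not whether $X$ is a translate but whether it sits in a proper torsion subvariety of $\pi_B^{-1}(T_j)$. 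The paper settles this directly from maximality: since $Y_{\zeta_0}$ is maximal and $X\supsetneq Y_{\zeta_0}$, $X$ cannot be $V$-torsion anomalous, hence it has the expected codimension $\dim\pi_B^{-1}(T_j)-\dim X=\dim A-\dim V$, and any torsion variety $B_1\subsetneq\pi_B^{-1}(T_j)$ containing $X$ would produce a $V$-torsion anomalous variety strictly containing $Y_{\zeta_0}$, a contradiction; the same codimension identity is then used to transfer anomalousness and maximality between $V$ and $X-\zeta_0$. You need both of these verifications for the bookkeeping you describe in your last paragraph to close up.
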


\begin{proof}
Consider the  projection $$\pi_B:A\rightarrow A/B.$$

We recall that the degree of the image via $\pi_B$ of a variety $X\subseteq A$ and the degree of the preimage  via $\pi_B$ of a variety $X\subseteq A/B$ only depend on $\deg X$, $\deg B$ and $\deg A$. In particular, $\deg \pi_B(V)$ is bounded in terms of $\deg V$, $\deg B$, $\deg A$ and
 $\deg A/B$ is bounded in terms of $\deg B$ and $\deg A$.

The proof of our proposition is done by induction on the dimension of $V$.\\

The base of our induction is the case of a curve, i.e. $\dim V=1$. 
 Then $\pi_B(V)$ is a weak-transverse  curve in $A/ B$, because $V$ is weak-transverse in $A$. Moreover the points of $Z_{V,A}$ map to torsion points of $\pi_B(V)$.  The number of torsion points of $\pi_B(V)$ is estimated using  Estimate \ref{lemrem}. Their preimage, which contains  $Z_{V,A}$,  {has then cardinality} effectively bounded in terms of $\deg V$, $\deg B$ and $c(A)$.\\

 Suppose by  inductive hypothesis, that the proposition  holds for every variety $V$ with $\dim V<n$. We then show that it holds for  $V$   of dimension $n$. \\

To prove our result, we are going to partition $Z_{V,A}$ into a finite union of subsets $Z_{X}$ associated with irreducible subvarieties $X$ of $V$ of dimension $<n$. We then verify that such varieties $X$ satisfy the  assumption of the proposition; by the inductive hypothesis, we deduce that the cardinalities $|Z_{X}|$ are effectively bounded  in terms of $\deg V$, $\deg B$ and $c(A)$.

Denote by $f: V \to A/B$ the restriction of $\pi_B$ to $V$.

If $f$ is dominant, then the generic fiber $F_p=V\cap (B+\tilde{p})$   has dimension \begin{equation}\label{equidimf}\dim F_p=\dim V-\codim B\end{equation}
here $p$ belongs to an open subset of $A/B$  and $f(\tilde{p})=p$. The  dimensional equation (\ref{equidimf}) shows that the  generic fiber is not anomalous. Consider the subset $V_\pi$ of $A/B$ given by all points that do not have generic fiber. By the fiber dimension theorem (see for instance \cite{Sha}, Section 6.3, Theorem 7), this is a proper closed subset of $\pi_B(V)$ and its degree is effectively bounded in terms of $\deg V$, $\deg B$ and $c(A)$. Note that,  the image of $Z_{V,A}$ via $\pi_B$ is a subset of the torsion of $V_\pi$,  indeed the fiber of a point in $\pi_B(Z_{V,A})$ is torsion anomalous and therefore does not satisfy the equality (\ref{equidimf}). 

If $f$ is not dominant, then set $V_\pi=\pi_B(V)$. Clearly, $Z_{V,A}$ is a subset of the torsion of $V_\pi$.

Note that in both cases:
\begin{enumerate}[(a)]
\item\label{ia} $\deg \Vpi$ is bounded  in terms of $\deg V$, $\deg B$ and $c(A)$. \end{enumerate}

Let  $T_1,\ldots,T_r$ be the isolated components of the closure of the torsion of $\Vpi$ intersecting $\pi_B(Z_{V,A})$.
Clearly
\[Z_{V,A}=\bigcup_{i=1}^r (\pi_B^{-1}(T_i)\cap Z_{V,A})\]
and
\begin{equation*}\label{card}|Z_{V,A}|=\sum_{i=1}^r |(\pi_B^{-1}(T_i)\cap Z_{V,A})|.\end{equation*} 

From Estimate \ref{lemrem} and \eqref{ia}, the number $r$ is effectively bounded  in terms of $\deg V$, $\deg B$, $\deg A$ and $c(A)$. Thus we shall prove that, 
 for every $1\le i \le r$, the cardinality $|(\pi_B^{-1}(T_i)\cap Z_{V,A})|$  is  effectively bounded in terms of $\deg V, \deg B$ and $c(A)$.

Let $T$ be one of the above components.  Define $$W=\pi_B^{-1}(T)\cap V.$$

We have that:
\begin{enumerate}[(i)]

 \item\label{degW}  $\deg W$ is bounded  in terms of $\deg V$, $\deg B$ and $\deg A$. Indeed by B\'ezout's theorem, $\deg W\le \deg \pi_B^{-1}(T) \deg V$. By Estimate \ref{lemBZ}, $\deg T$ is bounded in terms of the degree and the dimension of $\Vpi$ and  thus, by (\ref{ia}), in terms of $\deg B$, $\deg V$  and $c(A)$.
 
 \item\label{dimm}  $\dim W<n$, because $V$ is weak-transverse in $A$ and so it is not contained in $\pi_B^{-1}(T)$.  
  
\item\label{iii} For $\zeta\in \pi_B^{-1}(T)\cap Z_{V,A}$, each maximal  $V$-torsion anomalous  component   $Y_\zeta$  of $V\cap (B+\zeta)$ is contained in $W$, indeed $\pi_B(Y_\zeta)=\pi_B(\zeta)\in T$. 
 \end{enumerate}
 By \eqref{iii}, the variety $W$ contains all the $Y_\zeta$ that we are counting, however $W$ is not necessarily irreducible. Therefore we cannot hope to use the inductive hypothesis on $W$ and we have to consider its irreducible components.

Let  $X_1,\ldots,X_s$ be the irreducible components of $W$.  For $\zeta\in \pi_B^{-1}(T)\cap Z_{V,A}$, we denote by $Y_\zeta$ any maximal  $V$-torsion anomalous component of $V\cap (B+\zeta)$. By \eqref{iii}, clearly each $Y_\zeta$  is contained in some $X_i$. We are going to count the number of  $Y_\zeta$ contained in each $X_i$. 

Denote by \[Z_{X_j}=\{\zeta\in \pi_B^{-1}(T)\cap Z_{V,A}\mid {X_j} \text{ contains  some } Y_\zeta\}/( B\cap B^{\perp}) .\]

Then
\[\pi_B^{-1}(T)\cap Z_{V,A}=\bigcup_{j=1}^s Z_{X_j}.\]

The number $s$  of irreducible components of $W$    is bounded by $\deg W$. Thus, by \eqref{degW},  $s$ is effectively bounded only in terms of $\deg V$, $\deg B$ and $c(A)$.

To conclude our proof  we are left to bound in an effective way the cardinality of each  $Z_{X}$, for $X$ running over all irreducible components of $W$. 

If $X$ does not contain any $Y_\zeta$, then  $ |Z_{X}|=0$.

If $X=Y_{\zeta_0}$ for some $\zeta_0\in Z_{V,A}$, then $ |Z_{X}|=1$.

 Suppose  that  $X$ strictly contains $Y_{\zeta_0}$ for some $\zeta_0\in Z_{V,A}$.  In this case we are going to show  that $|Z_X|\le |Z_{X-\zeta_0, \pi_B^{-1}(T)-\zeta_0}|$. Applying the  inductive hypothesis, we estimate $ |Z_{X-\zeta_0, \pi_B^{-1}(T)-\zeta_0}|$ in terms of $\deg V$, $\deg B$ and $c(A)$.

 We first verify that $X-\zeta_0$ in $\pi_B^{-1}(T)-\zeta_0$ satisfies the assumption of the inductive hypothesis, that is the assumption of the proposition with $\dim X<n$.
 Observe that we  need to translate by $\zeta_0$ in order to obtain ambient varieties which are abelian varieties.
 \begin{itemize}
\item  The variety $\pi_B^{-1}(T)-\zeta_0$ is an abelian variety  containing $B$.
Indeed $B+\zeta_0$ is a subvariety of $\pi_B^{-1}(T)$ and $\zeta_0\in \pi_B^{-1}(T)$.\\

\item The variety $X-\zeta_0$ is weak-transverse in $\pi_B^{-1}(T)-\zeta_0$. Equivalently, by Remark \ref{remzero}, we show that $X$ is weak-transverse in $\pi_B^{-1}(T)$. 
Since $Y_{\zeta_0}$ is a maximal $V$-torsion anomalous variety and $X$ strictly contains $Y_{\zeta_0}$, then $X$ cannot be $V$-torsion anomalous. Recall that $X$ is a component of $V\cap \pi_B^{-1}(T)$. Thus 
 \begin{equation}
  \label{formulacod}\dim \pi_B^{-1}(T)-\dim X= \dim A-\dim V .\end{equation}
     
 If $ X$ were not weak-transverse in $\pi_B^{-1}(T)$, then $X\subseteq B_1\cap V$ with $B_1\subsetneq \pi_B^{-1}(T)$ a torsion variety.  This contradicts relation (\ref{formulacod}).
 \item By \eqref{dimm},  $\dim X \le \dim W<n$.\\  
 
\end{itemize}
Thus, by inductive hypothesis, we get that: 
\[|Z_{X-\zeta_0, \pi_B^{-1}(T)-\zeta_0}| \text{ is effectively bounded in terms of } \deg X, \deg B  \text{ and }c( \pi_B^{-1}(T)).\]

We now show that by our construction $\deg X$ and $c( \pi_B^{-1}(T))$ only depend on $\deg V$, $\deg B$ and $A$.

 \begin{itemize}

 \item By \eqref{degW}, $\deg X \le \deg W$ is effectively bounded  in terms of $\deg V$, $\deg B$ and $c(A)$.

\item  By Estimate \ref{lemrem}, we know that  $\deg  T$ is effectively bounded  in terms of $\deg \Vpi$ and $\dim V$. Moreover,  by  \eqref{ia}, $\deg \Vpi$  is bounded in terms of $\deg V$, $\deg B$ and $c(A)$.  Finally,  Estimate \ref{cG} ensures that 
$d_{ \pi_B^{-1}(T)}$ and 
$h_1( \pi_B^{-1}(T))$ are effectively bounded in terms of $\deg V$, $\deg B$ and $c(A)$.  
 \end{itemize}
Therefore  
\[|Z_{X-\zeta_0, \pi_B^{-1}(T)-\zeta_0}| \text{ is effectively bounded in terms of } \deg V, \deg B  \text{ and }c( A).\]

We finally prove that: $$|Z_X|\le |Z_{X-\zeta_0, \pi_B^{-1}(T)-\zeta_0}|.$$
We shall show that for every maximal $V$-torsion anomalous variety $Y_\zeta\subseteq X$, the variety $Y_\zeta-\zeta_0$ is a maximal $(X-\zeta_0)$-torsion anomalous variety {in $\pi^{-1}(T)-\zeta_0$}.

Clearly  $Y_\zeta-\zeta_0\subseteq (X-\zeta_0)\cap (B+\zeta-\zeta_0)$. Since $Y_\zeta$ is $V$-torsion anomalous we have
$$\dim B-\dim Y_\zeta<\dim A- \dim V.$$
From this and  \eqref{formulacod}  we obtain 
$$\dim B-\dim Y_\zeta<\dim A-\dim V=\dim \pi_B^{-1}(T)-\dim X.$$
Thus $Y_\zeta-\zeta_0$ is a  $(X-\zeta_0)$-torsion anomalous variety.

  In addition  $Y_\zeta-\zeta_0$ is maximal: let $Y'\supset Y_\zeta-\zeta_0$ be a maximal   $(X-\zeta_0)$-torsion anomalous variety and let  $B'+\zeta'$ be minimal for $Y'$. From \eqref{formulacod}, we have $$\dim B'-\dim Y'<\dim \pi_B^{-1}(T)-\dim X=\dim A-\dim V.$$ Thus $Y'+\zeta_0\subseteq V\cap (B'+\zeta'+\zeta_0)$ is  $V$-torsion anomalous and containing $Y_{\zeta}$. The maximality of $Y_{\zeta}$ as $V$-torsion anomalous implies $Y'+\zeta_0= Y_\zeta$. \\

In conclusion, collecting all our bounds, we have proven that 
$|Z_{V,A}|$
is effectively bounded in terms of $\deg V$, $\deg B$ and $c(A)$.
 \end{proof}

\section{Non translate torsion anomalous varieties}\label{seztanontraslati}
\begin{proof}[{\bf Proof of Theorem \ref{main} part \eqref{non_tras}}]
Let $Y$ be a maximal  $V$-torsion anomalous  variety which is  not a translate, and so of positive dimension. Let $B+\zeta$ be minimal for $Y$.
{We use the Arithmetic B\'ezout Theorem and the Bogomolov bound to prove that $\deg B$ is bounded only in terms of  $V$ and $A$, then we} deduce the bounds  for ${h}(Y)$ and $\deg Y$.

 By Lemma \ref{wt} $Y$  is weak-transverse in $B+\zeta$, and by assumption $\dim B =\dim Y +1$; therefore $Y$ is transverse in $B+\zeta$. 
 Applying the Bogomolov estimate Theorem \ref{due} to $Y$ in $B+\zeta$ we get
\begin{equation}\label{sotto}\frac{(\deg B)^{1-\eta}}{(\deg Y)^{{1}+\eta}}\ll_\eta \mu(Y).\end{equation}

Let $h_1, \dots , h_r$ be the forms of increasing degrees $d_i$ such that   $B+\zeta$ is a component of their zero set. We have that $r\leq \cod B\leq rg$ {and}
\begin{equation}\label{gradi}
 d_1 \cdots d_r \ll \deg (B+\zeta)=\deg B  \ll  d_1 \cdots d_r.
\end{equation}

{Consider} the algebraic subgroup given by the first $h_1\cdots h_{r-1}$ forms,  {and let} $B'$ be {one of its irreducible components} containing $B+\zeta$. Then  by \eqref{gradi} we have $$\deg B'  \ll d_1 \cdots d_{r-1}\ll (\deg B)^\frac{r-1}{r}$$ 
 and $\codim B'\geq\cod B-g$.
 
Since $\codim V\geq g+1=g+\dim B-\dim Y$, this implies {that} $\codim B'\geq\dim V-\dim Y$, {and thus}, by Lemma \ref{cruciale},  $Y$ is a component of $V\cap B'$.

We apply the Arithmetic B\'ezout Theorem to $V\cap B'$ and recall that $h(B')=0$, because $B'$  is a torsion variety; {we get} 
\begin{equation}\label{sopra}h(Y) \ll (h(V)+ \deg V) \deg B'\ll \cVv (\deg B)^{\frac{r-1}{r}}.\end{equation} 
{Zhang's inequality, 
with \eqref{sotto} and \eqref{sopra}, gives}
$$\frac{{(\deg B)}^{1-\eta}}{(\deg Y)^{1+\eta}}\ll_\eta\mu(Y)\ll  (h(V)+ \deg V)\frac{ (\deg B)^\frac{r-1}{r}}{\deg Y}.$$

Recall that $Y$ is a component of $ V\cap (\sotto+\zeta)$. By B\'ezout's theorem $\deg Y \le \deg \sotto  \deg V$, thus 
\[{(\deg B)}^{1-\eta}\ll_\eta  (h(V)+\deg V)(\deg B)^\frac{r-1}{r}{(\deg \sotto \deg V)}^{\eta}\] and therefore \[{(\deg B)^{\frac{1}{r}-2\eta}}\ll_\eta  (h(V)+\deg V){ (\deg V)}^{\eta}.\]

For $\eta$ small enough  we get
 \begin{equation}
 \label{forte}
 \deg B \ll_\eta   (h(V)+ \deg V)^{r+\eta}(\deg V)^{\eta};
 \end{equation}
{this proves} that the degree of $B$ is bounded only in terms of $V$ and $A$. 
 Since the  abelian subvarieties of bounded degree are finitely many, applying Proposition \ref{sottogruppo}  we conclude that $\zeta$ belongs to a finite set of cardinality effectively bounded.
 
{The bound on the  height of $Y$ is now given} by  (\ref{sopra}) and (\ref{forte}) 
$$h(Y)\ll_\eta \cVv^{r +\eta}(\deg V)^{\eta}.$$

{Finally, the} bound on the degree is {obtained from \eqref{forte} and}  B\'ezout's theorem for the component $Y$ of $V\cap B'$:  \[\deg Y\ll_\eta \cVv^{r-1+\eta}(\deg V).\qedhere\]
 \end{proof}

\section{Torsion anomalous translates}\label{caso_tra}
\begin{proof}[{\bf Proof of Theorem \ref{main}: parts \eqref{punto} and \eqref{trass}}]
Let $Y$ be a  maximal  $V$-torsion anomalous translate, with $\sotto+\zeta$ minimal for $Y$.

{We proceed to bound $\deg B$ and, in turn, the height and the degree of $Y$, using the Lehmer Estimate and the Arithmetic B\'ezout Theorem.}

The variety  $\sotto+\torsione$ is a component of the  torsion variety defined as the zero set of  forms $h_1,\dots,h_{r}$ of increasing degrees $d_i$, and $$d_1 \cdots d_{r}\ll\deg \sotto=\deg (\sotto+\torsione)\ll d_1 \cdots d_{r}.$$
We have that $r\leq \cod B\leq rg$.

Consider the torsion variety defined as the zero set of the first  $r-1$ forms $h_1, \dots, h_{r-1}$, and take  a connected component $B'$ containing $\sotto+\torsione$, {so that} 
$\deg B'\ll d_1 \cdots d_{r-1}\ll{(\deg B)}^{\frac{r-1}{r}}$
 and $\codim B'\geq\codim B -g.$

By Lemma \ref{cruciale},  Y  is a component of $V\cap B'$; indeed
\[\cod B'\geq \cod B-g=\dim A-g-\dim Y -1>\dim V-\dim Y-1.\]

\bigskip

The proof is now divided in two cases, depending on $\dim Y$. If $Y$ has dimension zero we use the Arithmetic B\'ezout Theorem and the Lehmer estimate; if $Y=H+Y_0$ is a translate of positive dimension, {we can reduce to the zero dimensional case using some properties of the essential minimum.}

{\bf Proof of part \eqref{punto}. }Consider first the case of a maximal torsion anomalous point $Y$.

All conjugates of $Y$ over $k_\mathrm{tor}(V)$ are {components of} $V\cap (B+\zeta)$, they all have the same normalised height and their number is at least  $$[k_\mathrm{tor}(V,Y):k_\mathrm{tor}(V)]\geq \frac{[k_\mathrm{tor}(Y):k_\mathrm{tor}]}{[k_\mathrm{tor}(V):k_\mathrm{tor}]}.$$
We then apply the Arithmetic B\'ezout Theorem in $V\cap B'$ obtaining
\begin{equation}\label{arbez61}
[k_\mathrm{tor}(Y):k_\mathrm{tor}]\hat{h}(Y)\ll {\cV}(\deg B)^{\frac{r-1}{r}}.
\end{equation}

 Applying Theorem \ref{carri2} to $Y$ in $B+\zeta$, we obtain that for every positive real $\eta$

\begin{equation}\label{car61}
\hat{h}(Y)\gg_\eta \frac{(\deg B)^{1-\eta}}{[k_{\mathrm{tor}}(Y):k_{\mathrm{tor}}]^{1+\eta}}.
\end{equation}

Combining \eqref{car61} and \eqref{arbez61}
we have
\begin{align*}
\frac{(\deg B)^{1-\eta}}{[k_{\mathrm{tor}}(Y):k_{\mathrm{tor}}]^{\eta}}&\ll_\eta[k_{\mathrm{tor}}(Y):k_{\mathrm{tor}}]\hat{h}(Y)\ll\\
&\ll {\cV}(\deg B)^{\frac{r-1}{r}}.
\end{align*}
For $\eta$ small enough we obtain
\begin{equation}\label{degB61}
\deg B\ll_\eta {(\cV)}^{r+\eta}[k_{\mathrm{tor}}(Y):k_{\mathrm{tor}}]^\eta.
\end{equation}
 Apply now B\'ezout's theorem to $V\cap B'$. All the conjugates of $Y$ over $k_\mathrm{tor}(V)$ are components of {this} intersection, so 
\begin{equation}\label{boundktor61}
\frac{[k_{\mathrm{tor}}(Y):k_{\mathrm{tor}}]}{[k_{\mathrm{tor}}(V):k_{\mathrm{tor}}]}\ll_\eta (\deg B)^{\frac{r-1}{r}} (\deg V).
\end{equation}
Substituting \eqref{degB61} into \eqref{boundktor61} we have the last bound of part (2) in the statement.

Finally, {we} apply  the Arithmetic B\'ezout Theorem to $V\cap B'$ to get
\begin{equation*}
\hat{h}(Y)\ll (h(V)+\deg V)(\deg B)^{\frac{r-1}{r}}\ll_\eta(h(V)+\deg V)^{r+\eta}[k_{\mathrm{tor}}(V):k_{\mathrm{tor}}]^{r-1+\eta}.
\end{equation*}

Having bounded $\deg B$, in view of Proposition \ref{sottogruppo} the points $\zeta$ belong to a finite set of cardinality effectively bounded.

\bigskip

{\bf Proof of part \eqref{trass}.} 
{Assume now that} $Y$ is a translate of positive dimension and write $Y=H+Y_0$, with $H$ an abelian variety and $Y_0$ a point in $H^\perp$.

{To bound $\deg B$ we can assume, without loss of generality, that $\zeta=0$ (see Remark \ref{remzero}).}


 By Lemma \ref{minimoess},  
 \begin{equation}
 \label{A}\mu(Y_0)=\mu(H+Y_0).
 \end{equation} 
  
Since the intersection $V\cap B'$ is defined {over $k_\mathrm{tor}(V)$}, every conjugate of $H+Y_0$ {over $k_\mathrm{tor}(V)$} is a component of $V\cap B'$; as before, such components have the same normalised height and  their number is at least $$ \frac{[k_\mathrm{tor}(H+Y_0):k_\mathrm{tor}]}{[k_\mathrm{tor}(V):k_\mathrm{tor}]}.$$

  We apply the Arithmetic B\'ezout Theorem {in} $V\cap B'$ and we obtain 
 \begin{equation}\label{arbez742}
 {h}(H+Y_0)\frac{[k_\mathrm{tor}(H+Y_0):k_\mathrm{tor}]}{[k_\mathrm{tor}(V):k_\mathrm{tor}]}\ll (h(V)+\deg V) {(\deg B)}^{\frac{r-1}{r}}.
 \end{equation}
 By the Zhang's inequality, \eqref{A} and \eqref{arbez742}, we deduce 
 \begin{equation}\label{n7t}
 \mu(Y_0)\ll \frac{\cV {(\deg B)}^{\frac{r-1}{r}}}{[k_\mathrm{tor}(H+Y_0):k_\mathrm{tor}]\deg H}.
 \end{equation}
 



The lower bound for $\mu(Y_0)$ is derived as in  the case of dimension zero. 

Consider the smallest abelian subvariety $H_0$ of $B$ containing $Y_0$. Clearly $H_0$ is  the irreducible component of $ H^\perp\cap B$  containing $Y_0$. Indeed, they are both one-dimensional abelian varieties containing the point  $Y_0$ of infinite order.

By the definition of $H_0$, $B=H+H_0$,  and from \cite{Masserwustholz}, Lemma 1.2, we obtain
 \begin{equation}\label{t73gradiHH'}
 \sharp(H\cap H_0)\deg B  \leq \deg H \deg H_0.
 \end{equation}
 Moreover, from $H\cap H_0 \subseteq H\cap H^\perp$,  we get
 \begin{equation}\label{intH}
 \sharp(H\cap H_0)\leq \sharp(H\cap H^\perp)\ll (\deg H)^2.
 \end{equation}

 Applying Theorem \ref{carri2} to $Y_0$ in $H_0$ we get that, for every positive real $\eta$
 \begin{equation}\label{carrizosa74}
 \mu(Y_0)=\hat{h}(Y_0)\gg_\eta \frac{(\deg H_0)^{1-\eta}}{[k_\mathrm{tor}(Y_0):k_\mathrm{tor}]^{1+\eta}}.
 \end{equation}
 
 We {remark} that
 \begin{equation}\label{t73conj}
  [k_\mathrm{tor}(Y_0):k_\mathrm{tor}]\leq [k_\mathrm{tor}(H+Y_0):k_\mathrm{tor}]\cdot \sharp(H\cap H_0)
 \end{equation}
because if $\sigma\in\Gal(\overline{k_\mathrm{tor}}/k_\mathrm{tor}(H+Y_0))$ then $\sigma(Y_0)-Y_0\in H\cap H_0$, so $[k_\mathrm{tor}(Y_0): k_\mathrm{tor}(H+Y_0)]\leq  \sharp(H\cap H_0)$.

 Combining the upper bound and the lower bound for $\mu(Y_0)$ in \eqref{n7t} and \eqref{carrizosa74}, and using also \eqref{t73gradiHH'}, \eqref{intH} and \eqref{t73conj}, {for $\eta$ sufficiently small}, we have 
 \begin{equation}\label{bougradoB2} \deg B\ll_\eta {(\cV)}^{r+\eta}\end{equation}
where the dependence on $\deg H [k_\mathrm{tor}(H+Y_0):k_\mathrm{tor}]$ has been removed applying B\'ezout's theorem to the intersection $V\cap B'$.

 This also gives
\begin{equation*}
  \deg(H+Y_0) \ll_\eta (\deg V){(\cV)}^{r-1+\eta}.
 \end{equation*}
 
 Finally, from \eqref{arbez742}, \eqref{bougradoB2}  and the trivial bound $[k_\mathrm{tor}(H+Y_0):k_\mathrm{tor}]\geq 1$ we obtain
 \begin{equation*}\label{stimah74}
 h(H+Y_0)\ll_\eta {(h(V)+\deg V)}^{r+\eta}{[k_\mathrm{tor}(V):k_\mathrm{tor}]}^{r-1+\eta}.
 \end{equation*}
 
 Since we have bounded $\deg B$, we can conclude from Proposition \ref{sottogruppo} that the points $\zeta$ belong to a finite set of cardinality effectively bounded. 
\end{proof}

\section{The case of a curve and applications to the Effective Mordell-Lang Conjecture}\label{lacurva} 

{Recall that  $A=\prod_{i=1}^\ell A_i^{e_i}$, with $A_i$ non-isogenous simple CM factors of dimension $g_i$. To prove Theorem \ref{curva} we essentially follow the proof of Theorem \ref{main} part \eqref{punto}, working first in the projection {on} one factor, and then lifting the construction to the abelian variety $A$.}
\begin{proof}[Proof of Theorem \ref{curva}]
 

Clearly  all the points {in $ \mathcal{S}(C)$} are   $C$-torsion anomalous; {in addition, since $C$ is  a weak-transverse curve,} each torsion anomalous point is maximal. 

If $Y\in \mathcal{S}(C)$, then $Y\in C\cap H$, with $H=\prod_i H_i$ the  subgroup containing $Y$ which is minimal with respect to the inclusion.

Denote by $Y_i$ the projection of $Y$ on $H_i$ and by $C_i$ the projection of $C$ on $A_i^{e_i}$.
{Let $j$ be one of the index satisfying the hypothesis of the theorem}.
Assume {first} {that  $Y_j$} is a torsion point and define $H'=A_1^{e_1}\times \dotsb \times \{Y_j\}\times \dotsb \times A_\ell^{e_\ell}$. Clearly $\deg H'\ll 1$ and $h(H')=0$. Then, applying the Arithmetic B\'ezout Theorem to $Y$ in $C\cap H'$, we get
$
 \hat{h}(Y)\ll (h(C)+\deg C).
$

{Assume now}
that $Y_j$ is not a torsion point. Let $B_j+\zeta_j$ be a component of $H_j$ containing $Y_j$.
Clearly $\dim B_j=\dim H_j$ and $Y_j\in C_j\cap (B_j+\zeta_j)$ with $B_j+\zeta_j$ minimal for $Y_j$.
Furthermore, $Y_j$ is a component of $C_j\cap (B_j+\zeta_j)$ because $C_j$ is weak-transverse and, {by assumption,} $\codim H_j>g_j \dim H_j>0$.
{{This} ensures that the matrix {associated} to $B_j+\zeta_j$ has at least 2 {rows}, which is necessary to apply the method.

{We now sketch the proof which follows the proof of Theorem \ref{main} part \eqref{punto} and we give the relevant bounds.}}

%

%

The variety  $B_j+\zeta_j$ is a component of the zero set of  forms $h_1,\dots,h_{r}$ of increasing {degrees $d_j$ with} $$d_1 \cdots d_{r}\ll\deg B_j=\deg (B_j+\zeta_j)\ll d_1 \cdots d_{r},$$
{and we have} that $r={\codim B_j}/g_j=\codim H_j/g_j$.

Consider the torsion variety defined as the zero set of $h_1$, and let $B_j'$ be  one of its connected components containing  $B_j+\zeta_j$; {then}
$
\deg B_j'\ll d_1\ll(\deg B_j)^{\frac{1}{r}}=(\deg B_j)^{\frac{g_j}{\codim B_j}}.$

From Theorem \ref{carri2} applied to $Y_j$ in $B_j+\zeta_j$, for every positive real $\eta$ we get
\begin{equation}\label{carrizosa81}
\hat{h}(Y_j)\gg_\eta \frac{(\deg B_j)^{\frac{1}{\dim B_j}-\eta}}{[k_{\mathrm{tor}}(Y_j):k_{\mathrm{tor}}]^{\frac{1}{\dim B_j}+\eta}}.
\end{equation}

Notice that all conjugates of $Y_j$ over $k_\mathrm{tor}(C_j)$ are components of $C_j\cap B_j'$ {and they all have the same height}.
Applying the Arithmetic B\'ezout Theorem to $C_j\cap B_j'$  and arguing as in the proof of Theorem \ref{main}, we have 
\begin{equation}\label{ArBez81}
\hat{h}(Y_j)\frac{[k_{\mathrm{tor}}(Y_j):k_{\mathrm{tor}}]}{[k_{\mathrm{tor}}(C_j):k_{\mathrm{tor}}]}\ll (h(C_j)+\deg C_j) (\deg B_j)^{\frac{g_j}{\codim B_j}}.
\end{equation}
 {Recall that $[k_{\mathrm{tor}}(Y_j):k_{\mathrm{tor}}]\geq 1$. }
From \eqref{carrizosa81} and \eqref{ArBez81} we get
$$
(\deg B_j)^{\frac{\codim B_j-g_j\dim B_j}{\codim B_j \dim B_j}-\eta}\ll_\eta [k_{\mathrm{tor}}(C_j):k_{\mathrm{tor}}] (h(C_j)+\deg C_j)   [k_{\mathrm{tor}}(Y_j):k_{\mathrm{tor}}]^{\frac{1}{\dim B_j}-1+\eta}.
$$
Since $\codim B_j-g_j\dim B_j=\codim H_j-g_j\dim H_j\geq 1$,  for $\eta$ {sufficiently} small {this yields}
\begin{equation}\label{degB81}
 \deg B_j\ll_\eta {([k_{\mathrm{tor}}(C_j):k_{\mathrm{tor}}] (h(C_j)+\deg C_j))}^{\frac{\codim H_j\dim H_j}{\codim H_j-g_j\dim H_j}+\eta},
\end{equation}
{where we use $ [k_{\mathrm{tor}}(Y_j):k_{\mathrm{tor}}]\geq 1$ if $\dim B_j>1$, and $[k_{\mathrm{tor}}(Y_j):k_{\mathrm{tor}}]\leq [k_{\mathrm{tor}}(C_j):k_{\mathrm{tor}}]\deg B \deg C_j$  if $\dim B_j=1$.}

We now lift the construction to $A$ as follows. Define $H'=A_1^{e_1}\times \dotsb \times B_j'\times\dotsb\times A_\ell^{e_\ell}.$
{Clearly $\deg H'\leq \deg A \deg B_j'$ and $Y$  is a component of} $C\cap H'$.
Applying the Arithmetic B\'ezout Theorem {to $C\cap H'$} and using \eqref{degB81}, we obtain 
\begin{multline}\label{sharper}
\hat{h}(Y)\ll (h(C)+\deg C)\deg H'\ll (h(C)+\deg C)(\deg B_j)^{\frac{g_j}{\codim H_j}} \\
\ll_\eta (h(C)+\deg C)^{\frac{\codim H_j}{\codim H_j-g_j\dim H_j}+\eta}[k_{\mathrm{tor}}(C):k_{\mathrm{tor}}]^{\frac{g_j\dim H_j}{\codim H_j-g_j\dim H_j}+\eta}.
\end{multline}
\end{proof}

\subsection{An application to the Effective Mordell-Lang Conjecture}\label{morlang}
\begin{proof}[Proof of Corollary \ref{MLR}] 

Let $x\in C\cap \Gamma$.
Let $j$ be an index such that $\frac{e_j}{g_j+1}>t_j$
and denote by $(x_1,\dots , x_{e_j})$ the projection of $x$ in $\Gamma_j$.

Let $\gamma_1, \dots , \gamma_{t_j}$ be  generators of the free part of $\overline{\Gamma}_j$.  Then there exist elements
 $0\neq a_k\in{\mathrm{End}}(A_j)$ for $k=1,\dotsc,e_j$, an $e_j\times t_j$ matrix $M_j$ with coefficients in  ${\mathrm{End}}(A_j)$ and a torsion point $\zeta\in A_j^{e_j}$ such that $$(a_1 x_1,\ldots,a_{e_j} x_{e_j})^{t}=M_j (\gamma_1,\ldots,\gamma_{t_j})^{t}+\zeta.$$  If the rank of $M_j$ is zero, then $(x_1,\dots , x_{e_j})$ is a torsion point and so it has height zero. 
 
If $M_j$ has positive rank $r_j$, we can choose $r_j$ equations of the system corresponding to $r_j$ linearly independent rows of $M_j$. We use these equations to write the $\gamma_k$ in terms of the $x_k$ and we substitute these expressions in the remaining equations. We obtain a system of maximal rank with $e_j-r_j\geq e_j-t_j$ linearly independent equations in the variables $x_1, \dots ,x_{e_j}$:
$$
 \begin{cases}
  m_{11}x_1+ \dots +m_{1e_j}x_{e_j}=\xi_1\\
  \vdots \\
  m_{e_j-r_j,1}x_1+ \dots +m_{e_j-r_j,e_j}x_{e_j}=\xi_{e_j-r_j}
\end{cases}$$
  where  $\xi_k\in A_j$ are  torsion points and $m_{k\ell}\in{\mathrm{End}}(A_j)$. These equations define a torsion variety $H_j\subseteq A_j^{e_j}$. {Since $(g_j+1)t_j<e_j$ we have}
  $\codim H_j>g_j \dim H_j. $

Then $x\in C\cap H$, where $H$ satisfies the hypothesis of Theorem \ref{curva}, which gives the bound for the height of $x$.
\end{proof}
\begin{remark}\label{rangoinfinito}
Notice that it is possible to apply the corollary also to subgroups $\Gamma$ {whose  rank is bounded only on one projection.}

For example, let $E_1,E_2$ be two elliptic curves defined over $\qe$ and such that $E_1(\qe)$ is an abelian group of rank 1, and consider the product $A=E_1^4\times E_2$. 


{Let} $C$ be a weak-transverse curve in $A$. {Consider the subgroup} $\Gamma=E_1(\qe)^4\times E_2(\overline{\mathbb{Q}})$ {of} $A$.
{Then} $\Gamma$ {is not of} finite rank, but, with the notation of the corollary, we have $g_1=1, e_1=4, t_1=1$ and
$ t_{1}<\frac{e_1}{g_1+1}=\frac{4}{2}.$

{The hypothesis of the corollary is therefore verified, and we have that
$$\hat{h}(C\cap \Gamma)  \ll_\eta (h(C)+\deg C)^{3/2+\eta}[k_\mathrm{tor}(C):k_\mathrm{tor}]^{1/2+\eta}.$$}
\end{remark}

\section{From curves to varieties}\label{dacurve}
We now adapt the proof strategy of Theorem \ref{main}  to obtain some new  results for varieties $V$ of dimension $>1$ embedded in a power $E^n$ of a CM elliptic curve.
For subvarieties of general CM abelian varieties some technical condition arises. This makes a straightforward  generalisation  of our method of little interest.

For torsion anomalous varieties which are translates, the proof can be easily adapted, while for non translates a new argument is needed. {Indeed, in this last case,  the torsion anomalous variety is not transverse, but only weak-transverse in its minimal variety, a condition which is not sufficient to use the sharp Bogomolov bound.}

The torsion varieties contained in $V$ are already covered by the Manin-Mumford Conjecture, therefore we restrict ourselves to the $V$-torsion anomalous varieties which are {not torsion.}
\begin{thm}\label{t51}
Let $E$ be a CM elliptic curve defined over a number field $k$ and let $n>1$ be an integer.
Denote by $k_{\mathrm{tor}}$ a field of definition of all torsion points of $E$.

Let $V\subseteq E^n$ be a weak-transverse variety. Let $Y\subseteq V\cap B+\zeta$ be a maximal   $V$-torsion anomalous variety {which is not a torsion variety}, and let $B+\zeta$ be minimal for $Y$. 

Set $b=\dim B$, $v=\dim V$ and $y=\dim Y$ and assume that
$(n-b)>(v-y) (b-y).$

 Then for any $\eta>0$ there exist  constants depending only on $E^n$ and $\eta$ such that: 
\begin{enumerate}
\item\label{pu} if $Y$ is a point, then
\begin{align*}
\deg B &\ll_\eta   \left((h(V)+ \deg V)[k_{\mathrm{tor}}(V):k_{\mathrm{tor}}]\right)^{\frac{(n-b)b}{(n-b)-vb}+\eta},\\
\hat h(Y)  &\ll_\eta   (h(V)+ \deg V)^{\frac{n-b}{(n-b)-vb}+\eta} [k_{\mathrm{tor}}(V):k_{\mathrm{tor}}]^{\frac{vb}{(n-b)-vb}+\eta},\\
[k_{\mathrm{tor}}(Y):k_{\mathrm{tor}}] &\ll_\eta \deg V (h(V)+ \deg V)^{\frac{vb}{(n-b)-vb}+\eta} [k_\mathrm{tor}(V):k_\mathrm{tor}]^{\frac{n-b}{(n-b)-vb}+\eta};
\end{align*}
\item\label{trapos} if $Y$ is a translate of positive dimension, then 
\begin{align*}
\deg B &\ll_\eta  (\cV)^{\frac{(n-b)(b-y)}{(n-b)-(v-y)(b-y)}+\eta},\\
h(Y)  &\ll_\eta {(h(V)+\deg V)}^{\frac{n-b}{(n-b)-(v-y)(b-y)}+\eta}{[k_\mathrm{tor}(V):k_\mathrm{tor}]}^{\frac{(v-y)(b-y)}{(n-b)-(v-y)(b-y)}+\eta},\\
\deg Y &\ll_\eta  \deg V(\cV)^{\frac{(v-y)(b-y)}{(n-b)-(v-y)}+\eta};
\end{align*}

\item\label{ntra} if $Y$ is not a translate, then 
\begin{align*}
\deg B &\ll_\eta  \left((h(V)+\deg V) [k_\mathrm{tor}(V):k_\mathrm{tor}]\right)^{\frac{(b-y)(n-b)}{(n-b)-(v-y)(b-y)}+\eta},\\
h(Y)  &\ll_\eta (h(V)+\deg V)^{\frac{(n-b)}{(n-b)-(v-y)(b-y)}+\eta} [k_\mathrm{tor}(V):k_\mathrm{tor}]^{\frac{(v-y)(b-y)}{(n-b)-(v-y)(b-y)}+\eta},\\
 \deg Y &\ll_\eta  \deg V \left((h(V)+\deg V) [k_\mathrm{tor}(V):k_\mathrm{tor}]\right)^{\frac{(v-y)(b-y)}{(n-b)-(v-y)(b-y)}+\eta}.
\end{align*}
\end{enumerate}
 In addition  the torsion points $\zeta$ belong to a  finite set.
\end{thm}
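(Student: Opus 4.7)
The plan is to follow the strategy of Theorem \ref{main}, adapted to allow general relative codimension $b-y$ and exploiting the fact that in $E^n$ the codimension of any abelian subgroup equals the number of forms defining it, so that $r=n-b$. The starting point in each case will be the construction of an auxiliary torsion variety $B'$. Writing $B+\zeta$ as a component of the zero set of forms $h_1,\ldots,h_r$ of increasing degrees $d_1\le\cdots\le d_r$, I take $B'$ to be a component of the zero set of the first $v-y$ of these forms. Then $\codim B'\ge v-y$, and monotonicity of the $d_i$'s gives
$$\deg B'\ll d_1\cdots d_{v-y}\le(\deg B)^{(v-y)/(n-b)}.$$
By Lemma \ref{cruciale}, $Y$ is a component of $V\cap B'$, so the Arithmetic B\'ezout Theorem (applied with $h(B')=0$) controls the sum of heights of the components of this intersection.

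In case \eqref{pu} I combine Arithmetic B\'ezout on $V\cap B'$ with the relative Lehmer bound (Theorem \ref{carri2}) applied to the point $Y$ in its minimal $B+\zeta$. The Galois conjugates of $Y$ over $k_{\mathrm{tor}}(V)$ are all components of $V\cap B'$, contributing a factor $[k_{\mathrm{tor}}(Y):k_{\mathrm{tor}}]/[k_{\mathrm{tor}}(V):k_{\mathrm{tor}}]$ to the B\'ezout estimate. Absorbing $\eta$-powers of $\deg V$ and $\deg B$ as in the proof of Theorem \ref{main} will yield the bound on $\deg B$. The bound on $\hat h(Y)$ will then follow from a direct single-component use of Arithmetic B\'ezout (without dividing by the number of conjugates), which is exactly what produces the asymmetric exponents of $h(V)+\deg V$ and $[k_{\mathrm{tor}}(V):k_{\mathrm{tor}}]$; the bound on $[k_{\mathrm{tor}}(Y):k_{\mathrm{tor}}]$ will come from classical B\'ezout applied to $V\cap B'$.

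Case \eqref{trapos} will adapt the argument of Theorem \ref{main} part \eqref{trass}. I write $Y=H+Y_0$ with $Y_0\in H^\perp$ a point, use Lemma \ref{minimoess} to equate $\mu(Y_0)=\mu(H+Y_0)$, and apply relative Lehmer to $Y_0$ inside the minimal torsion subvariety $H_0\subseteq B$ of dimension $b-y$ containing $Y_0$, with $B=H+H_0$. Combining this with Arithmetic B\'ezout on $V\cap B'$, Zhang's inequality to relate essential minimum and normalised height, and the estimate $\sharp(H\cap H_0)\le\sharp(H\cap H^\perp)\ll(\deg H)^2$ to transfer field-of-definition bounds between $Y_0$ and $H+Y_0$, will deliver the stated bounds after rearrangement; the exponent $b-y$ in the Lehmer bound is exactly what turns the product into the denominator $(n-b)-(v-y)(b-y)$.

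The main obstacle is case \eqref{ntra}. The sharp Bogomolov inequality (Theorem \ref{due}), which drives Theorem \ref{main} part \eqref{non_tras}, requires $Y$ to be \emph{transverse} in a translate, but Lemma \ref{wt} only yields weak-transversality of $Y$ in $B+\zeta$, and under general relative codimension $b-y>1$ the two notions are strictly distinct. My plan is to consider the smallest translate $H_Y+p$ containing $Y$: by minimality $Y$ is transverse in $H_Y+p$, and since $B+\zeta$ is minimal for $Y$, the projection $\Phi_B$ collapses $H_Y+p$ to a point, forcing $H_Y\subseteq B$. I then apply Theorem \ref{due} inside $H_Y+p$ to obtain a Bogomolov-type lower bound on $\mu(Y)$ in terms of $\deg H_Y$ and $\deg Y$. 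Controlling $\deg H_Y$ is the delicate point: it must be bounded in terms of $\deg B$ by B\'ezout-type arguments inside $B$, or, where that fails, replaced by a Lehmer-type lower bound on the height of a suitable generic point of $Y$, which is precisely what introduces the factor $[k_{\mathrm{tor}}(V):k_{\mathrm{tor}}]$ in the stated exponents and which is absent from Theorem \ref{main} part \eqref{non_tras}. Once a sufficiently strong lower bound on $\mu(Y)$ is obtained, the bounds on $\deg B$, $h(Y)$ and $\deg Y$ follow from Arithmetic B\'ezout on $V\cap B'$ and Zhang's inequality; finally, the finiteness of the torsion points $\zeta$ is handled by Proposition \ref{sottogruppo} as soon as $\deg B$ is bounded.
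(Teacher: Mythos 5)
Your treatment of cases \eqref{pu} and \eqref{trapos} matches the paper's argument essentially step for step: the component $B'$ cut out by the first $v-y$ forms with $\deg B'\ll(\deg B)^{(v-y)/(n-b)}$, Lemma \ref{cruciale} to place $Y$ inside $V\cap B'$, Arithmetic B\'ezout counted against the conjugates over $k_{\mathrm{tor}}(V)$, Carrizosa's bound for the point (resp.\ for $Y_0$ in $H_0=H^\perp\cap B$ of dimension $b-y$), and the bookkeeping via $\sharp(H\cap H_0)\deg B\le \deg H\deg H_0$ and $[k_{\mathrm{tor}}(Y_0):k_{\mathrm{tor}}]\le[k_{\mathrm{tor}}(H+Y_0):k_{\mathrm{tor}}]\,\sharp(H\cap H_0)$. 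Nothing to add there.

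In case \eqref{ntra} you have correctly located the obstacle and the first move (pass to the smallest translate $H_1+Y_0\subseteq B$ containing $Y$, in which $Y$ is transverse), but the mechanism you propose for the alternative is where the actual content lies, and one branch as you state it would fail. The paper writes $Y=Y_1+Y_0$ with $Y_1$ transverse in $H_1$ and $Y_0\in H_1^{\perp}$ a point for which $H_0=B\cap H_1^{\perp}$ is minimal, so that $\mu(Y)=\mu(Y_1)+\hat h(Y_0)$ by Lemma \ref{minimoess}. The dichotomy is then forced by $\deg B\le \deg H_1\deg H_0/\sharp(H_1\cap H_0)$: either $(\deg B)^{(h_1-y)/(b-y)}<\deg H_1$, and the Bogomolov bound for $Y_1$ in $H_1$ already gives the needed lower bound for $\mu(Y)$ in terms of $\deg B$; or $(\deg B)^{h_0/(b-y)}\le \deg H_0/\sharp(H_1\cap H_0)$, and one applies Lehmer to the \emph{specific} point $Y_0$ in $H_0$. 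Your substitute, a ``Lehmer-type lower bound on the height of a suitable generic point of $Y$'', does not work: a generic point of $Y$ has unbounded degree over $k_{\mathrm{tor}}$, so Carrizosa's bound degenerates, and the height of a single generic point does not bound $\mu(Y)$ from below -- only the distinguished point $Y_0$ does, via the Philippon--Bertrand orthogonality. The factor $[k_{\mathrm{tor}}(V):k_{\mathrm{tor}}]$ then enters through $[k_{\mathrm{tor}}(Y_0):k_{\mathrm{tor}}]\le[k_{\mathrm{tor}}(Y):k_{\mathrm{tor}}]\,\sharp(H_1\cap H_0)$ and B\'ezout applied to the conjugates of $Y$, not through a generic point. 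Once this dichotomy is in place, the rest of your outline (Arithmetic B\'ezout on $V\cap B'$, Zhang's inequality, Proposition \ref{sottogruppo} for the finiteness of the $\zeta$) goes through as in the paper.
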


\begin{proof}[{\bf Proof of Theorem \ref{t51} part \eqref{pu}}]

Let $Y$ be a  maximal  $V$-torsion anomalous point, with $\sotto+\zeta$ minimal for $Y$.

We proceed to bound $\deg B$ and, in turn, the height of $Y$ and the degree of its field of definition. {To this aim we use the Lehmer bound in Theorem \ref{carri2} and the Arithmetic B\'ezout Theorem.}


Let $v=\dim V$ and $b=\dim B$.
By Lemma \ref{cruciale},  $Y$  is a component of $V\cap B'$ where $B'$ is, like in the proof of Theorem \ref{main}, the zero component of the  torsion variety defined by the first $v$ rows  $h_1,\dots, h_v$ of the matrix of  $B$. Then $\cod B'=v$ and 
$
\deg B'\ll {(\deg B)}^{\frac{v}{n-b}}.$

We apply the Arithmetic B\'ezout Theorem to $V\cap B'$ {to obtain}
\begin{equation}\label{arbez61uno}
\hat{h}(Y)\ll \frac{\cV}{[k_\mathrm{tor}(Y):k_\mathrm{tor}]}(\deg B)^{\frac{v}{n-b}}.
\end{equation}
 Applying {the Lehmer estimate in} Theorem \ref{carri2} to $Y$ in $B+\zeta$, {instead, we have} that for every positive real $\eta$
\begin{equation}\label{car61uno}
\hat{h}(Y)\gg_\eta \frac{(\deg B)^{\frac{1}{b}-\eta}}{[k_{\mathrm{tor}}(Y):k_{\mathrm{tor}}]^{\frac{1}{b}+\eta}}.
\end{equation}
From \eqref{arbez61uno} and \eqref{car61uno}, and for 
 $\eta$ small enough, we  get
the bound for $\deg B$ if $b>1$. {If $b=1$ we use B\'ezout's theorem to bound the factor $[k_{\mathrm{tor}}(Y):k_{\mathrm{tor}}]^\eta$ as $((\deg B)(\deg V)[k_\mathrm{tor}(V):k_\mathrm{tor}])^\eta$.}

We then apply B\'ezout's theorem in $V\cap B'$ to bound $[k_{\mathrm{tor}}(Y):k_{\mathrm{tor}}]$ and 
 the Arithmetic B\'ezout Theorem {in $V\cap B'$} to prove the bound for $\hat{h}(Y)$. 
{Finally, from Proposition \ref{sottogruppo} it follows that} the points $\zeta$ belong to a finite set of cardinality effectively bounded.
\end{proof}

\begin{proof}[{\bf Proof of Theorem \ref{t51} part \eqref{trapos}}]
Let $Y=H+Y_0$ be a  maximal  $V$-torsion anomalous translate of positive dimension, with minimal $\sotto+\zeta$; {assume also that} $Y_0\in H^\perp$.

We use {the} Lehmer bound and the Arithmetic B\'ezout Theorem to bound $\deg B$ and, in turn, {the height and the degree of $H+Y_0$.}
In view of Remark \ref{remzero}, without loss of generality we can assume  that $\zeta=0$.

Let $b=\dim B, v=\dim V$ and $y=\dim Y=\dim H$. 
Clearly $v-y<n-b$ because $Y$ is torsion anomalous. 

{As before, by Lemma \ref{cruciale} we have that} Y  is a component of $V\cap B'$, where $B'$ is an irreducible torsion variety with $\codim B'=v-y$ and $
\deg B'\ll {(\deg B)}^{\frac{v-y}{n-b}}.
$


Arguing as usual on the conjugates of $H+Y_0$ {over  $k_\mathrm{tor}(V)$, we see that the} intersection $V\cap B'$ has at least $ [k_\mathrm{tor}(H+Y_0):k_\mathrm{tor}]/[k_\mathrm{tor}(V):k_\mathrm{tor}]$ components.
 
  We apply the Arithmetic B\'ezout Theorem {to the intersection $V\cap B'$, obtaining}
 \begin{equation}\label{arbez7421}
 {h}(H+Y_0)\ll (h(V)+\deg V) {(\deg B)}^{\frac{v-y}{n-b}}\frac{[k_\mathrm{tor}(V):k_\mathrm{tor}]}{[k_\mathrm{tor}(H+Y_0):k_\mathrm{tor}]}.
 \end{equation}
 By Zhang's inequality, Lemma \ref{minimoess} and  \eqref{arbez7421}, we deduce
 \begin{equation}\label{n7t1}
 \mu(H+Y_0)=\mu(Y_0)\ll \frac{\cV {(\deg B)}^{\frac{v-y}{n-b}}}{[k_\mathrm{tor}(H+Y_0):k_\mathrm{tor}]\deg H}.
 \end{equation}

 For the lower bound for $\mu(Y_0)$, the proof follows the case of $\dim Y=0$. 
{Let $H_0=H^\perp\cap B$. 
 By minimality of $B$ we have that  $H_0$
is a torsion variety of {minimal} dimension containing $Y_0$, thus 
\[\dim H_0=\dim H^\perp+\dim B - n=(n-y)+b-n=b-y\] 

}

{As in Theorem \ref{main} part \eqref{trass}, one} can easily see that
 \begin{equation}\label{t73conj1}
  [k_\mathrm{tor}(Y_0):k_\mathrm{tor}]\leq [k_\mathrm{tor}(H+Y_0):k_\mathrm{tor}]\cdot \sharp(H\cap H_0).
 \end{equation}
By the definition of $H_0$, $B=H+H_0$  and from \cite{Masserwustholz}, Lemma 1.2, we get
 \begin{equation}\label{t73gradiHH'1}
 \sharp(H\cap H_0)\deg B  \leq \deg H \deg H_0.
 \end{equation}
 In addition $H\cap H_0\subseteq H\cap H^\perp$ thus
 \begin{equation}\label{degH2}
 \sharp(H\cap H_0)\ll (\deg H)^2.
 \end{equation}
Applying Theorem \ref{carri2} to $Y_0$ in $H_0$ we get that, for every positive real $\eta$
 \begin{equation}\label{carrizosa741}	
 \mu(Y_0)=\hat{h}(Y_0)\gg_\eta \frac{(\deg H_0)^{\frac{1}{ b-y}-\eta}}{[k_\mathrm{tor}(Y_0):k_\mathrm{tor}]^{\frac{1}{b-y}+\eta}}.
 \end{equation}
 
 {Combining  \eqref{n7t1} and \eqref{carrizosa741} we have 
\begin{equation*}
\frac{(\deg H_0)^{\frac{1}{ b-y}-\eta}}{[k_\mathrm{tor}(Y_0):k_\mathrm{tor}]^{\frac{1}{b-y}+\eta}} \ll_\eta \frac{\cV {(\deg B)}^{\frac{v-y}{n-b}}}{[k_\mathrm{tor}(H+Y_0):k_\mathrm{tor}]\deg H}
 \end{equation*}
and hence, using  \eqref{t73conj1}, \eqref{t73gradiHH'1}, \eqref{degH2} as it was done in Theorem \ref{main} part \eqref{trass}, we get the bound for $\deg B$; notice that, if $b-y>1$, the argument is in fact simpler, as we don't need to deal with the  $[k_\mathrm{tor}(Y_0):k_\mathrm{tor}]^\eta$ term. 
 }

 
Having obtained a bound for $\deg B$, the degree of $H+Y_0$ can be bounded applying B\'ezout's theorem to the intersection $V\cap B'$ and using
$\deg B'\ll\deg B^\frac{v-y}{n-b}.$
The bound for $ h(H+Y_0)$, instead, is derived from \eqref{arbez7421} and the bound for $\deg B$.
Finally, from Proposition \ref{sottogruppo} we conclude that the points $\zeta$ belong to a finite set of cardinality effectively bounded. 
\end{proof}
\begin{proof}[{\bf Proof of Theorem \ref{t51} part \eqref{ntra}}] 
Assume that $Y\subseteq V\cap (B+\zeta)$ is not a translate.

 If $Y$ is transverse in $B+\zeta$ the proof of Theorem \ref{main} part \eqref{non_tras} easily adapts to this case as well, yielding the desired bounds; let us then assume that $Y$ is not transverse.  Without loss of generality, we can assume $\zeta=0$ (see Remark \ref{remzero}). Then $Y$ is transverse in a translate $H_1+Y_0\subsetneq B$, with $Y_0\in H_1^{\perp}$ and $H_1$ of minimal dimension. 

We define  $H_0=B\cap H_1^{\perp}$, so that  $B=H_1+H_0$ and 
\begin{equation}\label{degBsomma}\deg B=\deg (H_1+H_0)\leq \frac{\deg H_1\deg H_0}{\sharp(H_1\cap H_0)}.\end{equation} 

We set $y=\dim Y$, $v=\dim V$, $b=\dim B$, $h_1=\dim H_1$ and $h_0=\dim H_0=b-h_1$.

Writing $Y=Y_1+Y_0$ we have that $Y_1\subseteq H_1$ is transverse in $H_1$, because $Y$ is transverse in $H_1+Y_0$, and $Y_0\subseteq H_0$ is  transverse in $H_0$, because $B$ is minimal for $Y$.

By definition $Y_1\subseteq H_1$ and $Y_0\in H_1^{\perp}$. From Lemma \ref{minimoess} and  the  definition of essential minimum, we get \[\mu(Y)=\mu(Y_1)+\hat{h}(Y_0).\]

  As usual, the upper bound for $\mu(Y)$ is obtained using the Arithmetic B\'ezout Theorem  in $V\cap B'$ for some abelian variety $B'$, constructed deleting $v-y$ suitable rows from $B$. All conjugates of $Y$ are components of same height in $V\cap B'$. This gives 
  \begin{equation}\label{ancorabezu}
\mu(Y)\ll (h(V)+\deg V) (\deg B)^\frac{v-y}{n-b} \frac{[k_\mathrm{tor}(V):k_\mathrm{tor}]}{\deg Y[k_\mathrm{tor}(Y_1+Y_0):k_\mathrm{tor}]}.
\end{equation}

 Moreover 
\begin{equation}\label{confr} [k_\mathrm{tor}(Y_1+Y_0):k_\mathrm{tor}]\sharp(H_1\cap H_0)\geq [k_\mathrm{tor}(Y_0):k_\mathrm{tor}]\end{equation}  because for every $\sigma\in \mathrm{Gal}(\overline{k_\mathrm{tor}}/k_\mathrm{tor})$ which fixes $Y_1+Y_0$, the difference  $\sigma(Y_0)-Y_0$  lies in $H_1\cap H_0$.

To obtain a lower bound  for $\mu(Y)$ we either apply  the Bogomolov bound to $Y_1$ in $H_1$ or the Lehmer estimate to $Y_0$ in $H_0$. These give
\begin{equation}\label{bogoparte3casoa}
\frac{{(\deg H_1)}^{\frac{1}{h_1-y}-\eta}}{{(\deg Y)}^{\frac{1}{h_1-y}+\eta}}\ll_\eta \mu(Y_1)\leq \mu(Y)
\end{equation}
and 
\begin{equation}\label{BL2}\frac{(\deg H_0)^{\frac{1}{h_0}-\eta}}{[k_\mathrm{tor}(Y_0):k_\mathrm{tor}]^{\frac{1}{h_0}+\eta}}\ll_{\eta}\hat h(Y_0)\leq \mu(Y).\end{equation}

We now relate the left hand side to $\deg B$. Notice that one of the following relation holds: either 
\begin{equation}
\tag{i}(\deg B)^{\frac{h_1-y}{b-y}}< \deg H_1
\label{casoa}
\end{equation}
or  
\begin{equation}
\tag{ii} 
(\deg B)^\frac{h_0}{b-y}\leq \frac{\deg H_0}{\sharp(H_1\cap H_0)}.
\label{casob}
\end{equation}

 Indeed if \eqref{casoa} and \eqref{casob} were both false, then  \[\deg B=(\deg B)^{\frac{h_1-y}{b-y}+\frac{h_0}{b-y}}>\frac{\deg H_1\deg H_0}{\sharp(H_1\cap H_0)},\]
 which contraddicts  \eqref{degBsomma}.

Assume that (i) holds. Then (\ref{ancorabezu}), (\ref{bogoparte3casoa}) and (\ref{casoa}) and the fact that $n-b> (v-y)(b-y)$ give the bound 
\[{\deg B}\ll_\eta  \left((h(V)+\deg V) [k_\mathrm{tor}(V):k_\mathrm{tor}]\right)^{\frac{(b-y)(n-b)}{(n-b)-(v-y)(b-y)}+\eta},\]
where, if $h_1-y=1$, the factor $(\deg Y)^\eta$ has been removed applying  B\'ezout's theorem to $Y$ in $V\cap B$ and changing $\eta$ .

Assume that  \eqref{casob}  holds. Then (\ref{ancorabezu}), (\ref{BL2}) and (\ref{casob}), the fact that $n-b> (v-y)(b-y)$ and \eqref{confr} give the bound 
\[{\deg B}\ll_\eta  \left((h(V)+\deg V) [k_\mathrm{tor}(V):k_\mathrm{tor}]\right)^{\frac{(b-y)(n-b)}{(n-b)-(v-y)(b-y)}+\eta}\]
where, if $h_0=1$  the dependence on $[k_\mathrm{tor}(Y_0):k_\mathrm{tor}]$ can be removed using \eqref{confr}, bounding $[k_\mathrm{tor}(Y):k_\mathrm{tor}]$ as $[k_\mathrm{tor}(V):k_\mathrm{tor}]\deg V\deg B$ by the B\'ezout theorem applied to $Y$ in $V\cap B$  and observing that $\sharp(H_1\cap H_0)\leq \sharp(H_1\cap H_1^\perp)\ll (\deg H_1)^{2}\ll (\deg Y)^{2h_1}\le( \deg V \deg B)^{2h_1}$ because, since $Y_1$ is transverse in $H_1$, $H_1=Y_1+\cdots+Y_1$ ($h_1$ times), from which $\deg H_1\ll (\deg Y)^{h_1}$.

So we have bounded $\deg B$. 
We  obtain the bounds for $\deg Y$ and $h(Y)$ applying respectively the B\'ezout Theorem and the Arithmetic B\'ezout Theorem to the intersection $Y\subseteq V\cap B'$.
Finally,  Proposition \ref{sottogruppo} guarantees that the points $\zeta$ belong to a finite set of cardinality effectively bounded.

\end{proof}

\section*{Acknowledgments}
We  thank Francesco Veneziano for an accurate revision of an earlier version of this paper. We kindly thank the referee for his useful comments and corrections. Especially, we thank him  for pointing out the effectivity question of Proposition \ref{sottogruppo}. 

\def\cprime{$'$}
\providecommand{\bysame}{\leavevmode\hbox to3em{\hrulefill}\thinspace}
\providecommand{\MR}{\relax\ifhmode\unskip\space\fi MR }
\providecommand{\MRhref}[2]{%
  \href{http://www.ams.org/mathscinet-getitem?mr=#1}{#2}
}
\providecommand{\href}[2]{#2}

\end{document}